\documentclass{amsart}
\usepackage{amssymb}
\usepackage{amsfonts}
\usepackage[alphabetic,initials]{amsrefs}
\usepackage{graphicx}
\usepackage{eucal}
\usepackage{enumerate}

\newcommand{\B}{\mathcal{B}}
\newcommand{\R}{\mathbb{R}}

\renewcommand{\H}{\mathcal{H}}
\renewcommand{\L}{\mathcal{L}}

\newcommand{\F}{\mathcal{F}}
\newcommand{\loc}{\mathrm{loc}}
\newcommand{\step}[1]{\par\medskip\par\noindent\textit{#1}}

\newcommand{\LL}{\lfloor}

\newcommand{\supp}{\operatorname{supp}}

\newcommand{\dist}{\operatorname{dist}}
\renewcommand{\div}{\operatorname{div}}

\newcommand{\avint}{-\kern-10.7pt\int}

\def\XXint#1#2#3{{\setbox0=\hbox{$#1{#2#3}{\int}$}
\vcenter{\hbox{$#2#3$}}\kern-0.5\wd0}}

\mathchardef\ordinarycolon\mathcode`\:
\mathcode`\:=\string"8000
\begingroup \catcode`\:=\active
  \gdef:{\mathrel{\mathop\ordinarycolon}}
\endgroup

\renewcommand{\phi}{\varphi}
\renewcommand{\epsilon}{\varepsilon}

\theoremstyle{plain}
\newtheorem{theorem}{Theorem}[section]
\newtheorem{lemma}[theorem]{Lemma}
\newtheorem{proposition}[theorem]{Proposition}
\newtheorem{corollary}[theorem]{Corollary}
\theoremstyle{definition}

\theoremstyle{remark}
\newtheorem{remark}[theorem]{Remark}
\newtheorem*{claim}{Claim}
\numberwithin{equation}{section}

\title{Singular Perturbation Problem in Boundary/Fractional Combustion}

\author{Arshak Petrosyan}
\address{Department of Mathematics, Purdue University, West Lafayette,
  IN 47907, USA}
\email{arshak@math.purdue.edu}
\thanks{A.P. was supported in part by NSF grant DMS-1101139}

\author{Wenhui Shi}
\address{Mathematics Institute, Universit\"at Bonn, Endenicher Allee 62, 53115 Bonn, Germany}
\email{wenhui.shi@hcm.uni-bonn.de}
\thanks{W.S. was supported in part by NSF grant DMS-1101139 and the Hausdorff Center of Mathematics}

\author{Yannick Sire}
\address{Universit\'e Aix-Marseille, Institut de Math\'ematiques de Marseille, CMI, Technopole de Ch\^ateau-Gombert, Marseille,  France}
\email{yannick.sire@univ-amu.fr}
\thanks{Y.S. was supported in part by ANR projects HAB and NONLOCAL}

\subjclass[2010]{Primary 35R35, 35J70; Secodary 35R11}
\keywords{Free boundary problem, combustion theory, boundary reaction-diffusion,
  fractional Laplacian, singular perturbation problem,
  uniform estimates,
  monotonicity formula}

\begin{document}

\begin{abstract}
  Motivated by a nonlocal free boundary problem, we study uniform
  properties of solutions to a singular perturbation problem for a
  boundary-reaction-diffusion equation, where the reaction term is of
  combustion type. This boundary problem is related to the fractional
  Laplacian. After an optimal uniform H\"older regularity is shown, we
  pass to the limit to study the free boundary problem it leads to.
\end{abstract}

\maketitle

\tableofcontents
\section{Introduction}

In this paper we study nonnegative solutions for the semilinear
boundary-reaction-diffusion problem:
\begin{equation}\label{eq:P_eps}
  \tag{$P_\epsilon$}
  \begin{aligned}
    \L_s u_\epsilon:=\div(|x_n|^{1-2s} \nabla u_\epsilon)= 0&\quad\text{in } B_1^+=B_1\cap\{x_n>0\},\\
    -\lim_{x_n\to0+}x_n^{1-2s}\frac{\partial u_\epsilon}{\partial
      x_n}=-\beta _\epsilon(u_\epsilon)&\quad\text{on } B_1'=B_1\cap
    \{x_n=0\},
  \end{aligned}
\end{equation}
where $B_1$ is the unit ball in $\R^n$, $n\geq 2$, $s\in (0,1)$, and
$\epsilon$ is a small positive parameter. The nonlinear reaction term
$\beta _\epsilon(t)$ is of combustion type and is given by
\begin{equation}\label{eq:beta-eps}
  \beta_\epsilon(t)=\frac{1}{\epsilon}
  \beta\left(\frac{t}{\epsilon}\right),\quad t\in\R,
\end{equation}
with $\beta \in C^{0,1}_c(\R)$ satisfying
\begin{equation}\label{eq:beta}
  \beta\geq 0,\quad \supp\beta=[0,1],\quad\text{and}\quad \int_0^1\beta(t)dt =M.
\end{equation}
Note that the solutions of \eqref{eq:P_eps} are the critical points
(including the local minimizers) of the energy functional
\begin{equation}\label{eq:energy-eps}
  J_\epsilon(u)=\int_{B_1^+}|\nabla u|^2 |x_n|^{1-2s}+\int_{B_1'}2\B_\epsilon(u)
\end{equation}
among all functions in the weighed Sobolev space
$W^{1,2}(B_1^+, |x_n|^{1-2s})$ with fixed trace on
$(\partial B_1)^+=\partial B_1\cap\{x_n>0\}$, where $\B_\epsilon$ is
the primitive of $\beta_\epsilon$ given by
$$
\B_\epsilon(t)=\int_{0}^t\beta_\epsilon(s)ds.
$$
Formally, as $\epsilon\to 0+$, the functional $J_\epsilon$ converges
to
$$
J_0(u)=\int_{B_1^+}|\nabla u|^2
|x_n|^{1-2s}+\int_{B_1'}2M\chi_{\{u>0\}},
$$
which is the boundary (or thin) analogue of the Alt-Caffarelli
\cite{AC} energy functional. The study of the minimizers of $J_0$ has
been initiated in \cite{CRS} and by now there is a good understanding
of the associated free boundary problem. Namely, it is known that the
minimizers of $J_0$ solve (in the appropriate sense)
\begin{equation}\label{eq:freebdry}\tag{$P$}
  \begin{aligned}
    \div (|x_n|^{1-2s} \nabla u)=0&\quad\text{in }B_1^+,\\
    -\lim_{x_n\to0+}x_n^{1-2s}\frac{\partial u}{\partial x_n}=0&\quad\text{on } \{u>0\} \cap B_1',\\
    \lim_{t\to0+}\frac{u(x_0+t\nu'_{x_0})}{t^s}=
    \sqrt{\frac{2M}{c_0(s)}}&\quad\text{for }x_0\in \F_u,
  \end{aligned}
\end{equation}
where
$$
\F_u:=\partial\{u(\cdot,0)>0\}\cap B_1'
$$
is the \emph{free boundary} in the problem, $\nu'_{x_0}$ is the
in-plane, inner unit normal to $\{u(\cdot,0)>0\}$ and and $c_0(s)>0$
is a constant. The regularity properties of the free boundary $\F_u$
for the minimizers in the case $s=1/2$ have been studied in the series
of papers \cites{DR,DS1,DS2}, establishing the smoothness of flat free
boundaries. For the general $s\in(0,1)$, the $C^{1,\alpha}$ regularity
of flat free boundaries has been established in \cite{DSS}.

One of our main objectives in this paper is to show that the solutions
$u_\epsilon$ of the singular perturbation problem \eqref{eq:P_eps},
also converge to a solution to the free boundary problem
\eqref{eq:freebdry}, in a certain, weaker, sense. We show the uniform
$s$-H\"older regularity of $u_\epsilon$
(Theorem~\ref{thm:uniform_holder}), however, the passage to the limit
$u$ as $\epsilon\to 0+$ is complicated by the fact that
$\B_\epsilon(u_\epsilon)$ may not converge (in weakly-$*$ sense) to
$M\chi_{\{u>0\}}$. Nevertheless, at free boundary points $x_0\in\F_u$
with a measure-theoretical normal and a nondegeneracy condition on
$u$, we can establish an asymptotic development of $u$, implying the
free boundary condition in \eqref{eq:freebdry}
(Theorem~\ref{thm:maintheorem}).

This kind of convergence results are very well known in combustion
theory for the singular perturbation problems of the type
$$
\Delta u_\epsilon=\beta_\epsilon(u_\epsilon)\quad\text{in }B_1,
$$
(even in time-dependent case) with $\beta_\epsilon$ as in
\eqref{eq:beta-eps}, since the works of Zel'dovich and
Frank-Kamenetski\u\i\ \cite{ZFK}. Mathematically rigorous results,
however, are much more recent. Here we cite some of the important ones
for our paper: \cites{BCN,CV,Vazquez-zako,CLW2,CLW,DPS,Weiss}.

The singular-perturbation problem \eqref{eq:P_eps} can be also viewed
as the localized version of the global reaction-diffusion equation
\begin{equation}\label{eq:P_eps'}\tag{$P_\epsilon'$}
  \begin{aligned}
    (-\Delta_{x'})^s u_\epsilon
    =-\beta_\epsilon(u_\epsilon)&\quad\text{in
    }\Omega\subset\R^{n-1}\\
    u_\epsilon=g_\epsilon&\quad\text{on }\R^{n-1}\setminus\Omega
  \end{aligned}
\end{equation}
for the fractional Laplacian $(-\Delta_{x'})^s$ in
$x'=(x_1,\ldots,x_{n-1})$ variables, where $g_\epsilon$ is a
nonnegative function on $\R^{n-1}\setminus\Omega$ having the meaning
of the boundary data. We recall that the fractional Laplacian is
defined as the Fourier multiplier of symbol $|\xi'|^{2s}$ for
$s \in (0,1)$ (see \cite{landkof} for a treatment of these operators).
Note that the solutions of \eqref{eq:P_eps'} are the critical points
of the energy functional
$$
j(v)=c_{n,s}\int_{\R^{n-1}}\int_{\R^{n-1}}\frac{(u(x')-u(y'))^2}{|x'-y'|^{n-1+2s}}+\int_{\R^{n-1}}
2\B_\epsilon(u),
$$
among all functions such that $u=g_\epsilon$ on
$\R^{n-1}\setminus\Omega$.  (Here $c_{n,s}>0$ is a normalization
constant.)

The connection between \eqref{eq:P_eps} and \eqref{eq:P_eps'} is then
established through the so-called Caffarelli-Silvestre extension
\cite{caffaSil}: if for a given function $u$ on $\R^{n-1}$ (with
appropriate growth conditions at infinity) we consider the extension
$\tilde{u}$ to $\R^n_+=\R^{n-1}\times(0,\infty)$ by solving the
Dirichlet problem
\begin{align*}
  \L_s\tilde{u}=\div(x_n^{1-2s} \nabla \tilde{u})  = 0&\quad\text{in } \R^n_+\\
  \tilde{u}=u&\quad  \text{on } \R^{n-1}\times \{0\}
               \intertext{then}
               -c_{n,s}\lim_{x_n\to0+}x_n^{1-2s}\frac{\partial \tilde{u}}{\partial x_n} = (-\Delta_{x'})^s u&\quad \text{on } \R^{n-1}\times \{0\}
\end{align*}
for a positive constant $c_{n,s}$. Hence, if $u_\epsilon$ solves
\eqref{eq:P_eps'}, $x_0\in \Omega$ and $R>0$ are such that
$B'_R(x_0)\subset\Omega$, then the extension of $u_\epsilon$ to
$\R^{n}_+$ constructed as above will solve \eqref{eq:P_eps} in
$B_R^+(x_0)$. As a consequence, the singular perturbation problem
\eqref{eq:P_eps'} for the fractional Laplacian, becomes a boundary (or
thin) singular perturbation problem \eqref{eq:P_eps} for the operator
$\L_s$ in one dimension higher.

\subsection*{Main results and the structure of the paper}

In this paper, we will focus on the uniform estimate of the solutions
to $(P_\epsilon)$ and the proof of the free boundary condition in
\eqref{eq:freebdry}.

\begin{enumerate}[$\bullet$]
\item In \S\ref{s:holder} we prove the uniform $s$-H\"older regularity
  for the solutions of \eqref{eq:P_eps}, see
  Theorem~\ref{thm:uniform_holder}. This allows to pass to the limit
  as $\epsilon\to 0+$ and study the resulting solutions in the
  subsequent sections.

\item In \S\ref{s:limit}, we prove various results concerning the
  limits of $u_\epsilon$, or, more precisely, the limits of the pairs
  $(u_\epsilon,\B_\epsilon(u_\epsilon))$, which we denote
  $(u,\chi)$. The results include the compactness lemma
  (Lemma~\ref{lem:compactnesslemma}), ensuring the convergence in the
  proper spaces and Weiss-type monotonicity formulas for $u_\epsilon$
  and $(u,\chi)$ (Theorems~\ref{thm:monotonicity} and
  \ref{thm:monotonicity2}).
\item In \S\ref{s:asymptotic} we prove that the free boundary
  condition in problem \eqref{eq:freebdry} is satisfied at free
  boundary points with measure-theoretical normal for
  $\{u(\cdot,0)>0\}$, under the additional nondegeneracy condition
  (Theorem~\ref{thm:maintheorem}). This is done by identifying the
  blowups with flat free boundaries (Proposition~\ref{prop:basicex}).
  We conclude the paper by proving two additional propositions related
  to the Weiss energy at nondegenerate points
  (Propositions~\ref{prop:classify} and \ref{prop:classify2}).
\end{enumerate}

\subsection*{Notations and preliminaries}
\begin{enumerate}[$\bullet$]

\item We will use fairly standard notations in this paper.
  \begin{enumerate}[$\circ$]
  \item $\R^n$ will stand for the $n$-dimensional Euclidean space;
  \item For every $x\in\R^{n}$ we write $x=(x',x_n)$, where
    $x'=(x_1,\ldots,x_{n-1})\in\R^{n-1}$. This identifies $\R^n$ with
    $\R^{n-1}\times\R$. We also don't distinguish between $(x',0)$ and
    $x'$, thus identifying $\R^{n-1}$ with
    $\R^{n-1}\times\{0\}\subset\R^n$.
  \item $\R^n_{\pm}=\R^n\cap\{\pm x_n>0\}$;
  \item Balls and half-balls: $B_r(x)=\{y\in \R^n: |y-x|<r\}$,
    $B_r^\pm(x)=B_r(x)\cap \{\pm x_n>0\}$,
  \item `Thin' balls: $B'_r(x)=B_r(x)\cap \{x_n=0\}$.
  \item Typically, we skip the center in the notation for balls if it
    is the origin. Thus, $B_1=B_1(0)$, $B_1'=B_1'(0)$, etc.
  \end{enumerate}

\item For the functions $\beta$ and $\beta_\epsilon$, we make the
  following assumption throughout the paper. Besides
  \eqref{eq:beta-eps}--\eqref{eq:beta}, we fix a constant $A>0$ such
  that
$$
\max\{ |\beta(s)|, |\beta'(s)|\}\leq A, \quad \text{for all } s\in\R.
$$
We will also need to make a technical assumption that
$$
\beta>0\quad\text{on }(0,1).
$$
\item The functions $\B, \B_\epsilon :\R\rightarrow \R$ are the
  primitives of $\beta$ and $\beta_\epsilon$ given by
  \begin{align}\label{eq:Beps}
    \B(t)=\int_0^t \beta(s)ds,\quad
    \B_\epsilon(t)=\int_0^t\beta_\epsilon(s)ds=\B(s/\epsilon)
  \end{align}

\item \emph{Even extension of $u_\epsilon$ and weak solutions of
    \eqref{eq:P_eps}}.  In what follows, we will be extending the
  functions $u_\epsilon$ in $B_1^+$ with even reflection to all of
  $B_1$:
$$
u_\epsilon(x',-x_n)=u_\epsilon(x',x_n),\quad\text{for }x\in B_1^+.
$$
With such an extension in mind, $u_\epsilon$ is a \emph{weak solution}
of \eqref{eq:P_eps} if for any test function $\phi\in C^\infty_c(B_1)$
\begin{equation}\label{eq:variational}
  \int_{B_1}|x_n|^{1-2s}\nabla
  u_\epsilon \cdot \nabla \phi dx+
  \int_{B_1'}2\beta_\epsilon (u_\epsilon)\phi dx'
  =0,
\end{equation}
or, in other words,
\begin{equation*}
  \div( |x_n|^{1-2s}\nabla u_\epsilon)  = 2\beta_\epsilon(u_\epsilon)\chi_{\{x_n=0\}}
\end{equation*}
in the sense of distributions.

\emph{Unless specified otherwise, by a solution of \eqref{eq:P_eps} we
  will always understand a weak solution of \eqref{eq:P_eps}.}

We also note that for functions which are even symmetric in $x_n$, the
energy functional \eqref{eq:energy-eps} can be rewritten as
$$
J_\epsilon(v)=\frac12\int_{B_1}|x_n|^{1-2s}|\nabla
v|^2+\int_{B_1'}2\B_\epsilon(v).
$$
\item \emph{Rescalings.}  Finally, throughout the paper we will make
  the extensive use of rescalings. For a given $x_0\in B_1'$ and
  $\lambda>0$ define
$$
u_{\epsilon,\lambda}(x)=u_{\epsilon,\lambda}^{x_0}(x):=\frac{u(x_0+\lambda
  x)}{\lambda^s},\quad x\in B_{(1-|x_0|)/\lambda}.
$$
A straightforward computation shows that $u_{\epsilon,\lambda}$
satisfies
$$
\div(|x_n|^{1-2s}\nabla
u_{\epsilon,\lambda})=2\beta_{\epsilon/\lambda^s}(u_{\epsilon,\lambda})\H^{n-1}\LL\{x_n=0\}\quad\text{in
} B_{(1-|x_0|)/\lambda}.
$$
\end{enumerate}

\section{Uniform $C^{0,s}$ regularity} \label{s:holder}

In this section we prove the following uniform H\"older regularity
result for the solutions of \eqref{eq:P_eps}.

\begin{theorem}[Uniform $s$-H\"older estimate]
  \label{thm:uniform_holder}
  Let $u_\epsilon$ be a nonnegative solution of \eqref{eq:P_eps} with
  $\|u_\epsilon\|_{L^\infty(B_1)}\leq L$. Then
  $u_\epsilon \in C^{0,s}(K)$ for any $K\Subset B_1$ with
$$
\|u_\epsilon\|_{C^{0,s}(K)}\leq C(n,s,A,L,K)
$$
uniformly for all $\epsilon\in (0,1)$.
\end{theorem}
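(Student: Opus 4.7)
The plan is a standard contradiction-and-rescaling argument. Suppose the estimate fails; then there exist $\epsilon_k\in(0,1)$, solutions $u_k=u_{\epsilon_k}$ with $\|u_k\|_{L^\infty(B_1)}\le L$, and $K\Subset B_1$ on which $[u_k]_{C^{0,s}(K)}\to\infty$. A Campanato-type selection of the \emph{worst} violating scale produces centers $\bar x_k\in K$ and radii $\bar r_k\to 0$ for which
\[
L_k\;:=\;\bar r_k^{-s}\sup_{x\in B_{\bar r_k}(\bar x_k)}\bigl|u_k(x)-u_k(\bar x_k)\bigr|\;\longrightarrow\;\infty,
\]
while the analogous quotient at every scale $r\le\bar r_k$ stays comparable to $L_k$. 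Setting
\[
v_k(x)\;:=\;\frac{u_k(\bar x_k+\bar r_k x)-u_k(\bar x_k)}{L_k\bar r_k^{s}},
\]
one obtains functions with $v_k(0)=0$, $|v_k(x)|\le C|x|^s$ on $B_{R_k}$ with $R_k\to\infty$, and unit $s$-H\"older seminorm at unit scale.

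Using the scaling computation recorded in the preliminaries, $v_k$ satisfies a rescaled equation of the form
\[
\L_s v_k\;=\;\frac{2}{L_k^{2}}\,\beta_{\mu_k}\!\bigl(v_k+a_k\bigr)\,\chi_{\{x_n=0\}}\qquad\text{in } B_{R_k},
\]
with $\mu_k:=\epsilon_k/(L_k\bar r_k^{s})$ and $a_k:=u_k(\bar x_k)/(L_k\bar r_k^{s})$. Both $a_k$ and $L_k\bar r_k^{s}$ are uniformly bounded by the $L^\infty$ hypothesis, and the right-hand side has amplitude at most $2A/(L_k^{2}\mu_k)$. By Arzel\`a--Ascoli and the uniform pointwise H\"older estimate, a subsequence of $v_k$ converges locally uniformly to a limit $v_\infty\colon\R^n\to\R$ with unit $C^{0,s}$ seminorm on $B_1$, hence nonconstant. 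Passing to further subsequences we arrange $\mu_k\to\mu_\infty\in[0,\infty]$ and $a_k\to a_\infty\in[0,\infty)$.

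The contradiction is obtained from a trichotomy. If $L_k^{2}\mu_k\to\infty$, the right-hand side vanishes in distributional sense, so $v_\infty$ extends to a globally defined, even-symmetric, $\L_s$-harmonic function on $\R^n$ with at most $s$-H\"older growth; a Liouville-type statement for $\L_s$ (via the Fabes--Kenig--Serapioni interior H\"older theory for $A_2$-weighted equations, applied at every dyadic scale) forces $v_\infty$ to be constant, contradicting the unit seminorm. If $\mu_k$ stays bounded away from $0$ and $\infty$, then $\beta_{\mu_k}$ is uniformly Lipschitz and $v_\infty$ solves an equation of the same form as \eqref{eq:P_eps} with fixed $\mu_\infty>0$, for which the interior $C^{1,\alpha}$ theory at fixed $\epsilon$ yields the same contradiction. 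The genuinely hard case---and the main obstacle---is $\mu_k\to 0$: there $v_\infty$ is a global nonnegative solution of the thin free boundary problem \eqref{eq:freebdry} (after the translation by $a_\infty$), and one must classify such solutions of at most $s$-growth as either constants or explicit half-space profiles of the form $c_s\,\bigl(\langle x',\nu\rangle\bigr)_{+}^{s}$, neither of which has unit $s$-H\"older seminorm at the origin. This classification will lean on the $C^{1,\alpha}$ theory of flat free boundaries \cites{CRS,DR,DS1,DS2,DSS} combined with an $s$-homogeneity/Liouville argument, and the technical hypothesis $\beta>0$ on $(0,1)$ should enter precisely at this step to prevent the reaction set from collapsing in an uncontrolled way as we pass to the limit.
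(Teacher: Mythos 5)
Your strategy (blow-up of the H\"older seminorm at a worst scale, then a trichotomy ending in a Liouville/classification step) is genuinely different from the paper's, but as written it has a gap exactly where you locate ``the genuinely hard case.'' When $\mu_k=\epsilon_k/(L_k\bar r_k^{s})\to 0$ with $L_k^{2}\mu_k$ bounded, you assert that $v_\infty$ is a global solution of \eqref{eq:freebdry} and that such solutions of $s$-growth ``must'' be classified as constants or half-space profiles, leaning on the flat free boundary theory. No such classification is available here, and you cannot import it: the regularity results of \cite{DSS} etc.\ are for minimizers/viscosity solutions with flatness hypotheses, while your $v_\infty$ is only a limit of singularly perturbed solutions (not even nonnegative after the subtraction of $u_k(\bar x_k)$, and $a_k=u_k(\bar x_k)/(L_k\bar r_k^{s})$ is \emph{not} bounded in general --- only $L_k\bar r_k^{s}\le 2L$ is). Worse, the paper itself shows that limits of \eqref{eq:P_eps} need not satisfy the free boundary condition in \eqref{eq:freebdry}: that condition is recovered only at points with a measure-theoretic normal and a nondegeneracy hypothesis (Theorem~\ref{thm:maintheorem}), and all of those results sit downstream of the present H\"older estimate, so invoking them would be circular. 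There is also a compactness problem in the same regime: the right-hand side $\frac{2}{L_k^{2}}\beta_{\mu_k}(v_k+a_k)$ has amplitude $2A/(L_k^{2}\mu_k)$, which is not uniformly bounded unless $L_k^{2}\mu_k\to\infty$, so the ``uniform pointwise H\"older estimate'' you use for Arzel\`a--Ascoli is either the theorem being proved or an interior estimate (Lemma~\ref{lem:frac-int-est-holder}-type) that requires a bounded right-hand side --- neither applies in the critical case, and the Campanato selection by itself only controls oscillations on balls centered at $\bar x_k$, not equicontinuity of $v_k$ on compacta. A smaller issue: in the case $L_k^2\mu_k\to\infty$, iterating the Fabes--Kenig--Serapioni interior H\"older estimate over dyadic scales only kills growth below the FKS exponent, which may be smaller than $s$; a Liouville statement for $\L_s$ at growth $O(|x|^{s})$ needs a separate argument (e.g., tangential derivatives are again $\L_s$-harmonic).

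For contrast, the paper avoids any classification of global profiles. It proves a Harnack-type $L^\infty$ bound (Lemma~\ref{lem:harnack}) for the rescalings $u_{\epsilon,\epsilon^{1/s}}$ centered at points of $\Omega_\epsilon=\{u_\epsilon\le\epsilon\}$ --- there the compactness argument is benign because the rescaled right-hand side tends to zero --- and combines it with an explicit barrier, an $\L_s$-harmonic function with $c_0t^{s}$ asymptotics at the edge of a thin slit, to get the growth estimate $u_\epsilon\le\epsilon+C\dist(\cdot,\Omega_\epsilon)^{s}$ on $B_{1/4}'$. The full $C^{0,s}$ bound then follows from elementary interior estimates and a Poisson-kernel extension off $\{x_n=0\}$, with a standard dichotomy in the distance to $\Omega_\epsilon$. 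If you want to salvage your route, you would need to supply, independently of the rest of the paper, both the equicontinuity of $v_k$ in the regime $L_k^{2}\mu_k\not\to\infty$ and a Liouville-type classification of the resulting limits; the latter is essentially as hard as the problem the paper spends Sections~\ref{s:limit}--\ref{s:asymptotic} on, and is open at this level of generality.
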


Our proof follows the ideas from \cite{DPS} in the case of
$p$-harmonic functions. One of the main steps is the following
Harnack-type inequality.

\begin{lemma}[Harnack-type inequality]\label{lem:harnack}
  Let $v$ be a locally bounded nonnegative weak solution of
$$0\leq \div(|x_n|^{1-2s} \nabla v) \leq A\chi_{\{0<v<1\}}\mathcal
H^{n-1}\LL\{x_n=0\}\quad\text{in } B_1$$
with $v(0)\leq 1$. Then there exists a constant $C=C(n,s,A)$ such that
$$ \|v\|_{L^\infty(B_{1/4})}\leq C.$$
\end{lemma}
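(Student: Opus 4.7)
The plan is to dominate $v$ pointwise by its $\L_s$-harmonic replacement on a slightly smaller ball, then to control the value of that replacement at $0$ using the hypothesis $v(0)\le 1$ together with the integrability of the Green's function over the thin set. Concretely, let $G(x,y)$ denote the Green's function of $\L_s$ on $B_{1/2}$, which exists and is nonnegative by the Fabes--Kenig--Serapioni theory of divergence-form operators with $A_2$ weights, and let $P(v)$ be the $\L_s$-harmonic replacement of $v|_{\partial B_{1/2}}$. The hypothesis says that $\L_s v$ is a nonnegative Radon measure concentrated on $\{x_n=0\}\cap B_{1/2}$ with density $g(x')\le A$, so the Riesz/Green decomposition reads
\begin{equation*}
  v(y)=P(v)(y)-\int_{B_{1/2}'}G(x',y)\,g(x')\,d\H^{n-1}(x').
\end{equation*}
In particular $v\le P(v)$ throughout $B_{1/2}$, and taking $y=0$ yields
\begin{equation*}
  P(v)(0)=v(0)+\int_{B_{1/2}'}G(x',0)\,g(x')\,d\H^{n-1}(x')\le 1+A\int_{B_{1/2}'}G(x',0)\,d\H^{n-1}(x').
\end{equation*}

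The key estimate is that the last integral is bounded by a constant depending only on $n$ and $s$. A quick homogeneity computation shows that $|x|^{-(n-1-2s)}$ is $\L_s$-harmonic on $\R^n\setminus\{0\}$, so (up to normalization) it is the fundamental solution of $\L_s$; a maximum-principle comparison then gives the pointwise bound $G(x,0)\lesssim |x|^{-(n-1-2s)}$ in $B_{1/2}$. Integrating this on the thin set in $(n-1)$-dimensional radial coordinates yields
\begin{equation*}
  \int_{B_{1/2}'}|x'|^{-(n-1-2s)}\,d\H^{n-1}(x')\simeq\int_0^{1/2}\rho^{2s-1}\,d\rho<\infty,
\end{equation*}
which is finite since $2s-1>-1$. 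Hence $P(v)(0)\le C(n,s,A)$.

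Finally, since $P(v)$ is a nonnegative $\L_s$-harmonic function on $B_{1/2}$, the Harnack inequality of Fabes--Kenig--Serapioni applied on $B_{1/4}\subset B_{1/2}$ gives $\sup_{B_{1/4}}P(v)\le C(n,s)\,P(v)(0)\le C(n,s,A)$, and combining with $v\le P(v)$ proves the lemma. The main obstacle is justifying the Riesz decomposition when the source of $\L_s v$ is the thin $\H^{n-1}$ measure on $\{x_n=0\}$ rather than a bulk $L^p$ density; I would either invoke this directly from the potential theory for $A_2$-weighted operators, or approximate $\L_s v$ by smooth bulk data concentrating on the thin set and pass to the limit using the pointwise bound on $G$ just discussed.
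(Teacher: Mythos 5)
Your argument is correct in substance, but it takes a genuinely different route from the paper. The paper proves the lemma by compactness and contradiction: assuming the bound fails for a sequence $v_k$, it maximizes $(1-|x|)v_k$ over a region close to the thin set $\{v_k\le 1\}$, uses the interior Harnack inequality of Fabes--Kenig--Serapioni on a ball $B_{\delta_k}(x_k)$ where $v_k$ is $\L_s$-harmonic (this is where the structure $\chi_{\{0<v<1\}}$ of the right-hand side is used), rescales, invokes the uniform interior $s$-H\"older estimate (Lemma~\ref{lem:frac-int-est-holder}) for compactness, and contradicts the strong maximum principle; the constant is non-quantitative. Your proof is instead a direct potential-theoretic one: comparison with the $\L_s$-harmonic replacement plus the Green's function representation for the measure $\L_s v=g\,\H^{n-1}\LL\{x_n=0\}$, the bound $G(\cdot,0)\lesssim \Phi_s$, integrability of $\Phi_s$ over the thin ball, and Harnack for $P(v)$. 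This only uses $0\le \L_s v\le A\,\H^{n-1}\LL\{x_n=0\}$ (not the indicator structure), so it proves a slightly more general statement, and it yields an explicit constant. What it costs is exactly the step you flag: you must justify the Riesz/Green decomposition for the degenerate $A_2$-weighted operator with measure data concentrated on $\{x_n=0\}$; your approximation scheme (mollify the thin measure into bulk densities, use $0\le G(\cdot,0)\le \Phi_s$ and monotone/dominated convergence, or equivalently solve $\L_s w=-\mu$ with zero boundary data by Lax--Milgram using the trace embedding for $W^{1,2}(|x_n|^{1-2s})$ and compare $v=P(v)-w$) does work, but it is a nontrivial piece of machinery the paper's soft argument avoids. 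One small caveat: the formula $\Phi_s(x)=C_s|x|^{-(n-1-2s)}$ is the fundamental solution only when $n-1-2s>0$; in the borderline cases $n=2$, $s\ge 1/2$ it must be replaced by the logarithmic (resp.\ bounded) Green's function, though the needed conclusion $\int_{B'_{1/2}}G(x',0)\,d\H^{n-1}(x')\le C(n,s)$ is then even easier, so your proof survives with that adjustment.
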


To prove this lemma, we will need the following interior H\"older
estimate.

\begin{lemma}[Interior $s$-H\"older
  estimate]\label{lem:frac-int-est-holder} Let $|w|\leq M$ be a weak
  solution of
$$
|\div(|x_n|^{1-2s}\nabla w)|\leq
\mu\,\H^{n-1}\LL\{x_n=0\}\quad\text{in }B_1(x_0).
$$
Here $x_0$ is not necessarily on $\{x_n=0\}$.

Then $w\in C^{0,s}(B_{1/2}(x_0))$ with
$$
\|w\|_{C^{0,s}(B_{1/2}(x_0))}\leq C(n,s,\mu,M).
$$
\end{lemma}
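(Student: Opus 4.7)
My plan is to split $w$ into an $\L_s$-harmonic piece and a piece absorbing the measure-valued inhomogeneity, and then estimate each using the explicit form of the fundamental solution of $\L_s$ together with weighted elliptic theory.

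\textbf{Reduction.} If $\dist(x_0,\{x_n=0\})\ge 1/4$, then on $B_{1/2}(x_0)$ the weight $|x_n|^{1-2s}$ is smooth and uniformly bounded above and below, the right-hand side vanishes, and classical Schauder theory yields a bound far stronger than $C^{0,s}$. By a covering argument I may therefore assume $x_0\in\{x_n=0\}$. On $B_{3/4}(x_0)$ I write $w=w_1+w_2$, where $w_2$ is the even-in-$x_n$ weak solution of $\L_s w_2=\L_s w$ with zero boundary data (which exists via the variational formulation, since the right-hand side is a Radon measure supported on $B'_{3/4}(x_0)$ with $L^\infty$-density bounded by $\mu$), and $w_1:=w-w_2$ is $\L_s$-harmonic in $B_{3/4}(x_0)$.

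\textbf{Estimate of $w_2$.} A direct computation gives that the fundamental solution of $\L_s$ in $\R^n$ is, up to a positive constant, $|x|^{2s+1-n}$. Representing $w_2$ through the Green function of $\L_s$ on the ball expresses it as a single-layer potential of a bounded density on the hyperplane $\{x_n=0\}$; restricted to this hyperplane it becomes the Riesz potential of order $2s$ on $\R^{n-1}$ of a bounded function, hence of class $C^{0,s}$. A Poisson-type extension off the hyperplane preserves this regularity, yielding $\|w_2\|_{C^{0,s}(B_{3/4}(x_0))}\le C(n,s,\mu)$.

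\textbf{Estimate of $w_1$ and main obstacle.} Since $|x_n|^{1-2s}$ is an $A_2$ Muckenhoupt weight for $s\in(0,1)$, the Fabes--Kenig--Serapioni De Giorgi--Nash--Moser theory furnishes a quantitative $C^{0,\alpha_0}$ estimate for $w_1$ on $B_{1/2}(x_0)$ with some exponent $\alpha_0=\alpha_0(n,s)>0$, in terms of $\|w_1\|_{L^\infty}\le M+\|w_2\|_{L^\infty}$. The delicate point is to upgrade $\alpha_0$ to the sharp exponent $s$: exploiting the even symmetry together with the Caffarelli--Silvestre correspondence, the trace $w_1(\cdot,0)$ satisfies $(-\Delta')^s w_1(\cdot,0)=0$ in $B'_{3/4}(x_0)$ and is therefore smooth in the interior, and a Poisson-type representation propagates this into $C^{0,s}$ regularity of $w_1$ up to the thin set. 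The exponent $s$ is the natural one, reflecting the scale invariance $w(x)\mapsto\lambda^{-s}w(x_0+\lambda x)$ of the equation; equivalently, the same conclusion can be reached by a blow-up/compactness argument producing a global $\L_s$-harmonic limit with sub-$s$-growth, which must be constant by a Liouville-type theorem.
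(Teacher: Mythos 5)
Your strategy is genuinely different from the paper's: the paper reduces to a center lying on $\{x_n=0\}$ and simply quotes Remark~5.2 and Theorem~5.1 of \cite{ALP} for that case, handling centers off the thin set by uniform ellipticity of $\L_s$ away from $\{x_n=0\}$ plus a covering. Your layer-potential decomposition $w=w_1+w_2$ would make the lemma self-contained, and the $w_2$ part is essentially fine: the single-layer potential of a bounded density on $\{x_n=0\}$ with kernel comparable to $|x-y'|^{2s+1-n}$ is in fact $C^{0,\min(2s,1)}$ (so in particular $C^{0,s}$), although you should estimate the potential directly in both the tangential and the normal variables rather than appeal to a ``Poisson-type extension'' --- $w_2$ is not the Poisson extension of its trace, and the Green-function corrector is a harmless interior $\L_s$-harmonic term. (Minor point: if $\dist(x_0,\{x_n=0\})\ge 1/4$ the ball $B_{1/2}(x_0)$ may still meet the hyperplane, so the dichotomy in your reduction needs the paper's radii, e.g.\ $|(x_0)_n|>3/4$ versus $\le 3/4$; this is cosmetic.)

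The genuine gap is in the $w_1$ step. The lemma assumes no symmetry on $w$ --- indeed the paper later applies it, in Step~2 of the proof of Theorem~\ref{thm:uniform_holder}, to an \emph{odd} reflection --- so ``exploiting the even symmetry'' is not available: $w_1=w-w_2$ is merely $\L_s$-harmonic in the ball. Its odd part has zero trace, so the argument ``the trace is $s$-harmonic, hence smooth, and a representation of the trace propagates $C^{0,s}$ up to $\{x_n=0\}$'' cannot see it at all: $\mathrm{sgn}(x_n)|x_n|^{2s}$ is a bounded $\L_s$-harmonic function with identically vanishing trace, and the needed statement for the odd part (that it detaches from the thin set like $|x_n|^{\min(2s,1)}$) must be proved separately by a boundary-growth/Schauder-type estimate for the degenerate operator. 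Moreover, even for the even part, the assertion that $(-\Delta_{x'})^s w_1(\cdot,0)=0$ in $B'_{3/4}(x_0)$ requires a local argument: $(-\Delta_{x'})^s$ is nonlocal while $w_1$ is only defined in a ball, and $w_1$ need not coincide with the global Caffarelli--Silvestre extension of its trace; the honest local statement is that the weighted conormal derivative vanishes on the thin set, after which one must invoke interior regularity for the extension problem with zero Neumann data. Your fallback via blow-up and a Liouville theorem can close the argument (the smallest nonzero homogeneity of global $\L_s$-harmonic functions is $\min(1,2s)>s$), but as written it is only a gesture: it still needs the uniform Fabes--Kenig--Serapioni estimates for compactness and a Campanato-type iteration with the correct scaling. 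Filling these holes essentially amounts to reproving the estimates the paper imports from \cite{ALP}.
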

\begin{proof}\mbox{}
  \begin{enumerate}[(i)]
  \item When $x_0=0$, or more generally, $(x_0)_n=0$, we refer to
    Remark~5.2 and the proof of Theorem~5.1 in \cite{ALP}.

  \item The case of general $x_0$ is obtained by considering the
    subcases
    \begin{enumerate}[(a)]
    \item $|(x_0)_n|>3/4$, and
    \item $|(x_0)_n|\leq 3/4$.
    \end{enumerate}
    In the subcase (a) the equation is uniformly elliptic in
    $B_{5/8}(x_0)$, and the estimate follows from standard interior
    estimates for uniformly elliptic equations. In the subcase (b),
    the $s$-H\"older continuity in $B_{1/2}(x_0)\cap\{|x_n|\leq 1/8\}$
    is obtained from the case (i) above. The $s$-H\"older continuity
    in $B_{1/2}(x_0)\cap\{|x_n>1/8\}$ is obtained from the uniform
    ellipticity of the operator $\L_s$ in
    $B_1(x_0)\cap\{|x_n|>1/8\}$.\qedhere
  \end{enumerate}
\end{proof}

\begin{proof}[Proof of Lemma~\ref{lem:harnack}]
  We start by an observation that the function $v$ is continuous by
  Lemma~\ref{lem:frac-int-est-holder}. We will use this fact
  implicitly throughout the proof.

  We argue by contradiction. Assuming that the conclusion of the lemma
  fails, there exists a sequence of nonnegative solutions $v_k$ with
$$
v_k(0)\leq 1\quad\text{but}\quad \|v_k\|_{L^\infty(B_{1/4})}\geq k.
$$
Let $\Omega_k:=\{x\in B_1': v_k(x)\leq 1\}$,
$O_k:=\{x\in B_1: \text{dist}(x,\Omega_k)\leq \frac{1}{3}(1-|x|)\}$,
and
$$m_k:=\max_{O_k}(1-|x|)v_k(x).$$
Observe that $B_{1/4}\subset O_k$. Then
$m_k\geq \frac{3}{4}\sup_{B_{1/4}}v_k\geq \frac{3}{4}k$. Let
$x_k\in O_k$ such that $(1-|x_k|)v_k(x_k)=m_k$, then
\begin{equation}\label{eq:lower_bound_v}
  v_k(x_k)\geq m_k\geq \frac{3}{4}k.
\end{equation}
Consider the distance $\delta_k:=\text{dist}(x_k,\Omega_k)$ and
$y_k\in \Omega_k$ realize $\delta_k$. By \eqref{eq:lower_bound_v},
$\delta_k>0$. Using the fact that $\delta_k\leq \frac{1}{3}(1-|x_k|)$
and the triangle inequality, we obtain that
$B_{\delta_k/2}(y_k)\subset O_k$ and for any
$z\in B_{\delta_k/2}(y_k)$,
\begin{equation}\label{eq:lower_bound}
  v_k(z)\leq \frac{m_k}{1-|z|}=\frac{1-|x_k|}{1-|z|}v_k(x_k)\leq 2v_k(x_k).
\end{equation}
Since $v_k$ satisfies the homogeneous equation
$\div(|x_n|^{1-2s} \nabla v_k)=0$ in $B_{\delta_k}(x_k)$, by the
Harnack inequality in \cite{FKS} there exists $c=c(n,s)$ such that
\begin{align*}
  \inf_{B_{3\delta_k/4}(x_k)} v_k \geq c\,
  v_k(x_k).
\end{align*}
In particular, since
$\overline{ B_{\delta_k/4}(y_k)}\cap \overline{
  B_{3\delta_k/4}(x_k)}\neq \emptyset$, then
\begin{equation}\label{eq:upper_bound}
  \sup_{B_{\delta_k/4}(y_k)}v_k \geq cv_k(x_k).
\end{equation}
Define
$$ w_k(x):= \frac{v_k(y_k+\delta_k x)}{v_k(x_k)}, \quad x\in B_{1/2}.$$
From \eqref{eq:lower_bound} and \eqref{eq:upper_bound} we have
\begin{equation*}
  \sup_{B_{1/2}}w_k\leq 2\quad \text{and }  \sup_{B_{1/4}}w_k\geq c.
\end{equation*}
Moreover, using \eqref{eq:lower_bound_v} and recalling that
$v_k(y_k)\leq 1$, $w_k$ satisfies
\begin{gather*}
  0\leq \div(|x_n|^{1-2s} \nabla w_k) \leq
  \frac{4A\delta_k^{2-2s}}{3k}\mathcal
  H^{n-1}\LL\{x_n=0\}  \quad\text{in } B_{1/2}\\
  w_k\geq 0,\quad w_k(0)\leq \frac{4}{3k}.
\end{gather*}
Now, invoking Lemma~\ref{lem:frac-int-est-holder}, we obtain that
$w_k$ are uniformly $s$-H\"older continuous on compact subsets of
$B_{1/2}$ and hence, over a subsequence, they will converge locally
uniformly to a function $w_0$ which satisfies
\begin{align*}
  \div(|x_n|^{1-2s} \nabla w_0) =0\quad\text{in } B_{1/2},\quad 
  \sup _{B_{1/4}}w_0\geq c>0,\quad
  w_0\geq 0,\quad  w_0(0)=0
\end{align*}
This is a contradiction to the strong maximum principle in \cite{FKS}.
\end{proof}

Now we prove the uniform $C^{0,s}$ regularity of $u_\epsilon$.

\begin{proof}[Proof of Theorem~\ref{thm:uniform_holder}] Note that it
  will be sufficient to prove the uniform estimates for small
  $0<\epsilon<\epsilon_0$, with universal $\epsilon_0$, as the
  estimate for $\epsilon_0<\epsilon<1$ will follow from
  Lemma~\ref{lem:frac-int-est-holder}. It will also be sufficient to
  give the proof for $K=B_{1/8}$. Throughout the proof, we let
  $$\Omega_\epsilon:=\{x\in B_1':u_\epsilon\leq \epsilon\}.$$

  \step{Step 1}. We will show that there exists a constant
  $C=C(n,s,A)$ such that
  \begin{align*}
    u_\epsilon(x)\leq \epsilon + C\text{dist}(x,\Omega_\epsilon)^{s}, \quad x\in B'_{1/4}\setminus\Omega_\epsilon.
  \end{align*}
  The proof is based on the construction of a proper lower barrier function.\\
  Given $x_0\in B'_{1/4}\setminus \Omega_\epsilon$, let
$$m_0:= u_\epsilon(x_0)-\epsilon, \quad \delta_0:=\text{dist}(x_0,\Omega_\epsilon).$$
We are going to show that
$$m_0\leq C(n,s,A)\delta_0^{s}.$$
By the Harnack inequality (see \cite{CS}), there exists a constant
$c_{n,s}$ such that
$$u_\epsilon(x)-\epsilon\geq c_{n,s} m_0, \quad \text{for any }x\in B_{\delta_0/2}(x_0).$$
Next, we construct an auxiliary function as follows: Let
$A_{1/2,2}:=B_2\setminus \overline{B_{1/2}}$,
$A'_{1,2}:=B_2'\setminus B_1'$, and $D:=A_{1/2,2}\setminus
A'_{1,2}$.
Let $\phi:D\rightarrow \R$ be the solution to the following Dirichlet
problem
\begin{align*}
  \div(|x_n|^{1-2s} \nabla \phi)=0&\quad\text{in }D, \\
  \phi=1&\quad\text{on } \partial B_{1/2},\\
  \phi=0&\quad \text{on } \partial B_2\cup A'_{1,2}.
\end{align*}
By the boundary Hopf lemma and boundary growth estimate (see
\cite{CS}) as well as the symmetry of $\phi$, the function $\phi$ has
the following asymptotics at $\bar x\in \partial B'_1$: there exists
$c_0=c_0(n)>0$ such that
\begin{equation}\label{eq:asymp}
  \lim _{t\rightarrow 0+}\frac{\phi(\bar x +t\nu'_{\bar x})}{t^{s}}=c_0.
\end{equation}
Here $\nu'_{\bar x}$ is the in-plane outer unit normal of $A'_{1,2}$
at $\bar x$. Hence, we also have that
\begin{equation}
  \label{eq:asympt2}
  \frac{\phi(\bar x +t\nu'_{\bar x})}{t^{s}}> c_0/2\quad\text{for } 0<t<t_0 
\end{equation}
for sufficiently small $t_0$. Now let
$$ \psi(x):=c_{n,s}m_0 \phi\left(\frac{x-x_0}{\delta_0}\right),$$
and $D_{\delta_0, x_0}:=\{x: \frac{x-x_0}{\delta_0}\in D\}$. Note that
$D_{\delta_0, x_0}\subset B_1\setminus (\Omega_\epsilon\cap B'_1)$.
Applying the comparison principle in $D_{\delta_0, x_0}$ we have
\begin{equation}\label{eq:comparison}
  \psi(x)\leq u_\epsilon(x)-\epsilon, \quad x\in D_{\delta_0, x_0}.
\end{equation}
Choose now
$y_0\in \partial \Omega_\epsilon\cap \partial B_\delta(x_0)$ which
realizes the distance $\delta_0$. By \eqref{eq:comparison} and
recalling the explicit expression of $\psi$, we have
\begin{align}\label{eq:comparison2}
  \frac{c_{n,s}m_0\phi \left(\frac{y_0-x_0}{\delta_0}+\frac{t}{\delta_0}\nu'_{y_0}\right)}{t^s}\leq \frac{u_\epsilon(y_0+t\nu'_{y_0})-\epsilon}{t^s}.
\end{align}
We now want to use the estimate in Lemma~\ref{lem:harnack} to obtain
the bound on $m_0$. For that purpose, consider the following
rescalings at $y_0$
$$
{u}_{\epsilon,\epsilon^{1/s}}(x)=\frac{u_\epsilon(y_0+\epsilon^{1/s}x)}{\epsilon},
$$
which satisfy
$$
\div(|x_n|^{1-s}\nabla
{u}_{\epsilon,\epsilon^{1/s}})=2\beta({u}_{\epsilon,\epsilon^{1/s}})\H^{n-1}\LL\{x_n=0\}\quad\text{in
} B_{1/(2\epsilon^{1/s})}.
$$
Then, we can apply Lemma~\ref{lem:harnack} to conclude that
$|{u}_{\epsilon,\epsilon^{1/s}}|\leq C=C(n,s,A)$ in $B_{1/4}$. For the
function $u_\epsilon$ this translates into having the bound
$$
u_\epsilon(y_0+\epsilon^{1/s}x)\leq C\epsilon,\quad\text{for }|x|\leq
1/4.
$$
In particular, this gives that
$$
\frac{u_\epsilon(y_0+t\nu'_{y_0})-\epsilon}{t^s}\leq C,\quad\text{for
}t=\epsilon^{1/s}(\delta_0/4)
$$
Hence, from \eqref{eq:comparison2}, we obtain
$$
\frac{c_{n,s}m_0\phi
  \left(\frac{y_0-x_0}{\delta_0}+\tau\nu'_{y_0}\right)}{\tau^s}\leq
C\delta_0^s,\quad\text{for }\tau=\epsilon^{1/s}(1/4).
$$
Then, using \eqref{eq:asympt2}, we conclude that for small
$0<\epsilon<\epsilon_0$, necessarily
$$
m_0\leq C\delta_0^s
$$

\step{Step 2.}  We will show that for any
$y_0\in \Omega_\epsilon\cap B_{1/4}'$
\begin{equation}\label{eq:point_holder}
  |u_\epsilon(x)-u_\epsilon(y_0)|\leq C|x-y_0|^s
\end{equation}
for any $x\in B_1$, with a universal constant $C$.
\begin{enumerate}[(i)]
\item Suppose first $|x-y_0|\leq (1/8)\epsilon^{1/s}$.  For this case,
  recall that for the rescaling ${u}_{\epsilon,\epsilon^{1/s}}$ at
  $y_0$ defined in \emph{Step 1} above we have the estimate
  $\|{u}_{\epsilon,\epsilon^{1/s}}\|_{L^\infty(B_{1/4})}\leq C$. Then
  by Lemma~\ref{lem:frac-int-est-holder} we also have the estimate
  $\|{u}_{\epsilon,\epsilon^{1/s}}\|_{C^{0,s}(B_{1/8})}\leq C$, which
  then implies that
$$
|u_\epsilon(x)-u_\epsilon(y_0)|=\epsilon|{u}_{\epsilon,\epsilon^{1/s}}((x-y_0)/\epsilon^{1/s})-{u}_{\epsilon,\epsilon^{1/s}}(0)|\leq
C |x-y_0|^s
$$

\item Suppose now $x\in B_1'$ and $|x-y_0|\geq
  (1/8)\epsilon^{1/s}$. Then from \emph{Step 1} we have
  \begin{align*}
    |u_\epsilon(x)-u_\epsilon(y_0)|\leq 2\epsilon +C|x-y_0|^s\leq C|x-y_0|^s.
  \end{align*}
\end{enumerate}

Combining the estimates in (i)--(ii) above, we obtain that
\eqref{eq:point_holder} holds for any $x\in B_1'$. It remains to
establish \eqref{eq:point_holder} for $x\in B_1$ with $x_n\neq
0$. Note that it will be enough to show it for $x\in B_{1/2}^+$.

In order to do that, we first extend $u_\epsilon(\cdot,0)$ to all of
$\R^{n-1}$ by putting it equal to zero outside $B_1'$. Note that
estimate \eqref{eq:point_holder} will continue to hold now for all
$x\in\R^{n-1}$. Then, consider the convolution of the extended
$u_\epsilon(\cdot,0)$ with the Poisson kernel
$$P_{x_n}(x'):=C_{n,s}\frac{x_n^{2s}}{(|x'|^2+x_n^2)^{\frac{n-1+2s}{2}}}$$
for the operator $\L_s$. We then have
\begin{align*}
  &\left|(u_\epsilon(\cdot,0)\ast P_{x_n})(x') -u_\epsilon(y_0)\right|\\
  &\qquad=\left|\;\int_{\R^{n-1}}\left[u_\epsilon(x'-z',0)-u_\epsilon(y'_0,0)\right]P_{x_n}(z')
    dz'\right| \quad (\text{since $\textstyle
    \int _{\R^{n-1}}P_{x_n} =1$}, \ \forall x_n>0)\\
  &\qquad\leq C\int_{\R^{n-1}}|x'-y'_0-z'|^s P_{x_n}(z') dz' \quad (\text{by } \eqref{eq:point_holder})\\
  &\qquad\leq C \int_{\R^{n-1}}\left(|x'-y'_0|^s+|z'|^s\right)P_{x_n}(z')dz'\quad (\text{triangle inequality})\\
  &\qquad\leq C\left(|x'-y'_0|^s+|x_n|^s\right)\leq C|x-y_0|^s,
\end{align*}
where in the second last inequality we have used
$\int_{\R^{n-1}}|z'|^s P_{x_n}(z')dz'\leq C|x_n|^{s}$.  Next, the
difference
$$
v(x)=u_\epsilon(x)-\left(u_\epsilon(\cdot,0)\ast P_{x_n}\right)(x')
$$
satisfies
$$
\div(|x_n|^{1-2s}\nabla v)=0\quad\text{in }B_1^+,\quad
v=0\quad\text{on }B_1'.
$$
By making the odd reflection in $x_n$, we can make $v$ $\L_s$-harmonic
in $B_1$.  Hence, applying Lemma~\ref{lem:frac-int-est-holder}, we
will have
$$
\|v\|_{C^{0,s}(B_{1/2})}\leq C(n,s,L).
$$
Combining the estimates above, we then conclude that
\eqref{eq:point_holder} holds for all $x\in B_{1/2}$ and hence for all
$x\in B_1$.

\step{Step 3.} In this step, we complete the proof that
$u_\epsilon\in C^{0,s}(B_{1/8})$ uniformly in $\epsilon$. From
\emph{Step 2}, it is enough to show that for any
$x_1,x_2\in B_{1/8}\setminus \Omega_\epsilon$,
\begin{align*}
  |u_\epsilon(x_1)-u_\epsilon(x_2)|\leq C|x_1-x_2|^{s}.
\end{align*}
Let $d(x):=\dist(x,\Omega_\epsilon)$. Then consider the following two
subcases:
\begin{enumerate}[(a)]
\item Suppose that $|x_1-x_2|\leq
  \frac{1}{2}\max\{d(x_1),d(x_2)\}$.
  Without loss of generality, we assume that $d:=d(x_1)\geq
  d(x_2)$.
  Let also $y_1\in\Omega_\epsilon$ be such that $|x_1-y_1|=d$. Then,
  consider the rescaling of $u_\epsilon$ at $y_1$ by the factor of $d$
  \begin{align*} {u}_{\epsilon,d}(x):=\frac{u_\epsilon(y_1+d x)}{d^s}.
  \end{align*}
  From \emph{Step 2}, $0\leq {u}_{\epsilon,d}(x) \leq C|x|^s$ for
  $x\in B_1(\xi)$, $\xi:=(x_1-y_1)/d$.  Moreover, ${u}_{\epsilon,d}$
  satisfies the homogeneous equation
  $\div(|x_n|^{1-2s}\nabla {u}_{\epsilon,d})=0$ in $B_1(\xi)$.  By
  Lemma~\ref{lem:frac-int-est-holder},
  ${u}_{\epsilon,d}\in C^{0,s}(B_{3/4}(\xi))$. In particular, for
  $\eta:=\frac{x_2-y_1}{d}\in B_{3/4}(\xi)$ we have
  \begin{align*}
    |{u}_{\epsilon,d}(\eta)-{u}_{\epsilon,d}(\xi)|\leq C|\eta-\xi|^s.
  \end{align*}
  Rescaling back to $u_\epsilon$ we obtain
  \begin{align*}
    |u_\epsilon(x_1)-u_\epsilon(x_2)|\leq C|x_1-x_2|^{s}.
  \end{align*}
\item Suppose now $|x_1-x_2|>\frac{1}{2}\max\{d(x_1),d(x_2)\}$. In
  this case, by \emph{Step 2},
  \begin{align*}
    |u_\epsilon(x_1)-u_\epsilon(x_2)|&\leq C(d(x_1)^{s}+d(x_2)^{s})\leq C|x_1-x_2|^{s}.\qedhere
  \end{align*}
\end{enumerate}
\end{proof}

We conclude this section with the following remark that
$\{u_\epsilon\}$ are uniformly bounded also in
$W^{1,2}_{\loc}(B_1,|x_n|^{1-2s})$.
\begin{proposition}[Uniform $W^{1,2}$ bound]\label{prop:unif-W12}
  Let $u_\epsilon$ be a nonnegative solution of \eqref{eq:P_eps} with
  $\|u_\epsilon\|_{L^\infty(B_1)}\leq L$. Then
  $u_\epsilon\in W^{1,2}_{\loc}(B_1,|x_n|^{1-2s})$ for any
  $K\Subset B_1$ with
$$
\|u_\epsilon\|_{W^{1,2}(K,|x_n|^{1-2s})}\leq C(n,s,A,L,K)
$$
uniformly for all $\epsilon\in (0,1)$.
\end{proposition}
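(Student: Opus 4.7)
The plan is to prove the bound by a Caccioppoli-type estimate, using the weak formulation \eqref{eq:variational} with an appropriate test function and exploiting the sign of the boundary term.

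Fix a cutoff $\eta\in C^\infty_c(B_1)$ with $0\le\eta\le 1$, $\eta\equiv 1$ on $K$, and $|\nabla\eta|\le C(K)$. The strategy is to test \eqref{eq:variational} against $\phi=\eta^2 u_\epsilon$. Because $\beta_\epsilon\ge 0$ by \eqref{eq:beta}, and $u_\epsilon\ge 0$ by assumption, the boundary contribution satisfies
$$
\int_{B_1'}2\beta_\epsilon(u_\epsilon)\,\eta^2 u_\epsilon\,dx'\ge 0,
$$
so the weak formulation yields the one-sided inequality
$$
\int_{B_1}|x_n|^{1-2s}\,\nabla u_\epsilon\cdot\nabla(\eta^2 u_\epsilon)\,dx\le 0.
$$
This is the key point: dropping the boundary term eliminates any dependence on $\epsilon$ from the reaction, so the resulting estimate will automatically be $\epsilon$-uniform.

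Expanding the gradient product gives $\int|x_n|^{1-2s}\eta^2|\nabla u_\epsilon|^2 + 2\int|x_n|^{1-2s}\eta u_\epsilon\nabla u_\epsilon\cdot\nabla\eta \le 0$. Applying Young's inequality to the cross term to absorb half of $\int|x_n|^{1-2s}\eta^2|\nabla u_\epsilon|^2$ into the left-hand side, and then using $\|u_\epsilon\|_{L^\infty(B_1)}\le L$, leads to
$$
\int_{B_1}|x_n|^{1-2s}\eta^2|\nabla u_\epsilon|^2\,dx\le C\int_{B_1}|x_n|^{1-2s}u_\epsilon^2|\nabla\eta|^2\,dx\le CL^2\|\nabla\eta\|_{L^\infty}^2\int_{B_1}|x_n|^{1-2s}\,dx.
$$
The last integral is finite since $1-2s>-1$ for $s\in(0,1)$, and thus the gradient estimate follows with $C=C(n,s,A,L,K)$. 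The weighted $L^2$ bound on $u_\epsilon$ itself is immediate from the $L^\infty$ bound and the local integrability of the weight.

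The only genuinely technical point is justifying $\phi=\eta^2 u_\epsilon$ as a test function, since it is not in $C^\infty_c(B_1)$. This is routine: the class of admissible test functions in \eqref{eq:variational} extends by density to compactly supported elements of $W^{1,2}(B_1,|x_n|^{1-2s})$, and $\eta^2 u_\epsilon$ belongs to this class because $u_\epsilon$ is a weak solution (hence locally in the weighted $W^{1,2}$ space by definition) and $\eta$ is a compactly supported smooth cutoff. One can alternatively mollify $u_\epsilon$, use the mollification as a test function, and pass to the limit, relying on the continuity provided by Theorem~\ref{thm:uniform_holder}. No significant obstacle is expected; the argument is essentially the Dirichlet-energy analog of the standard Caccioppoli inequality, with the sign of $\beta_\epsilon$ playing the role usually played by subharmonicity.
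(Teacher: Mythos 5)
Your proof is correct and is essentially the paper's argument: the paper simply notes that $u_\epsilon$ is a nonnegative subsolution of $\L_s$ (i.e.\ $\div(|x_n|^{1-2s}\nabla u_\epsilon)\geq 0$ as a distribution, which is exactly your sign observation on the $\beta_\epsilon$ term) and invokes the standard energy (Caccioppoli) inequality, which is what you carry out explicitly with the test function $\eta^2 u_\epsilon$. Your handling of the admissibility of $\eta^2 u_\epsilon$ by density in the weighted Sobolev space is the right routine justification, so no changes are needed.
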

\begin{proof}
  Since $u_\epsilon$ is a nonnegative subsolution of $\L_s$, the proof
  follows from a standard energy inequality.
\end{proof}

\section{Passage to the limit as
  $\epsilon\rightarrow 0$}\label{s:limit}

\subsection{Compactness}

We start the section with the following local compactness lemma.
Recall that we always assume that the functions $u_\epsilon$ and $u$
are evenly extended in $x_n$-variable.

\begin{lemma}[Compactness and limit solutions]\label{lem:compactnesslemma}
  Let $u_\epsilon$ be a nonnegative solution to $(P_\epsilon)$. Then
  over a subsequence
  \begin{enumerate}[\upshape (i)]
  \item $\{u_\epsilon\}$ converges uniformly on compact subsets of
    $B_1$ to a function $u\in C^{0,s}_{\loc}(B_1)$.
  \item The limit function $u$ in (i) solves
    $\div(|x_n|^{1-2s} \nabla u)=0$ in $\{u>0\}$.
  \item
    $\beta_\epsilon(u_\epsilon)\stackrel{\ast}{\rightharpoonup} \mu$
    in the space of measures $\mathcal{M}(B_R')$ for any $0<R<1$.
  \item
    $|x_n|^{(1-2s)/2} \nabla u_\epsilon \rightarrow
    |x_n|^{(1-2s)/2}\nabla u$ strongly in $L^2_{\loc}(B_1)$.
  \item
    $\B_\epsilon (u_\epsilon)\stackrel{\ast}{\rightharpoonup} \chi$ in
    $L^\infty(B_1')$ for some $\chi\in L^\infty(B_1')$, where
    $\B_\epsilon$ are defined in \eqref{eq:Beps}.
  \end{enumerate}
  We call the function $u$ as above a \emph{limit solution} of
  \eqref{eq:freebdry}, and the pair $(u,\chi)$ a \emph{limit solution
    pair}.
\end{lemma}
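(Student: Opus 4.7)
The plan is to handle the five items in the order (i), (iii), (v), (ii), (iv), since each later assertion builds on the earlier ones. Part (i) is immediate from Theorem~\ref{thm:uniform_holder} and Arzel\`a--Ascoli: the $u_\epsilon$ are uniformly bounded and uniformly $s$-H\"older continuous on every compact $K\Subset B_1$, so a subsequence converges uniformly on compact subsets to a continuous limit $u\in C^{0,s}_{\loc}(B_1)$.

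For (iii), I would test the weak formulation \eqref{eq:variational} against a nonnegative $\phi\in C_c^\infty(B_1)$ with $\phi\equiv 1$ on $B_R$ to obtain
\[
2\int_{B'_R}\beta_\epsilon(u_\epsilon)\,dx'\leq \int_{B_1}|x_n|^{1-2s}|\nabla u_\epsilon|\,|\nabla\phi|\,dx,
\]
whose right hand side is controlled uniformly in $\epsilon$ by Proposition~\ref{prop:unif-W12}; Banach--Alaoglu in $\M(B'_R)$ then yields a weak-$*$ subsequential limit $\mu$. Part (v) is easier: $\B_\epsilon(t)=\B(t/\epsilon)$ and $\B$ is bounded by $M$, so $\|\B_\epsilon(u_\epsilon)\|_{L^\infty(B'_1)}\leq M$, and Banach--Alaoglu in $L^\infty(B'_1)$ produces $\chi$. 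For (ii), on any $V\Subset\{u>0\}\cap B_1$ the uniform convergence from (i) gives $u_\epsilon\geq c_0/2>\epsilon$ eventually, whence $\beta_\epsilon(u_\epsilon)\equiv 0$ on $V\cap B'_1$ since $\supp\beta=[0,1]$; thus $u_\epsilon$ is weakly $\L_s$-harmonic on $V$, and passing to the limit in \eqref{eq:variational} yields $\L_s u=0$ on $V$.

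The main obstacle is (iv), where Proposition~\ref{prop:unif-W12} supplies only weak convergence of $|x_n|^{(1-2s)/2}\nabla u_\epsilon$ in $L^2_{\loc}$. My strategy is to recover strong convergence through an energy identity. Fixing $\phi\in C_c^\infty(B_1)$ and testing \eqref{eq:variational} with $u_\epsilon\phi^2$ gives
\[
\int_{B_1}|x_n|^{1-2s}|\nabla u_\epsilon|^2\phi^2 = -2\int_{B_1}|x_n|^{1-2s}u_\epsilon\phi\,\nabla u_\epsilon\cdot\nabla\phi-2\int_{B'_1}\beta_\epsilon(u_\epsilon)u_\epsilon\phi^2.
\]
Passing to the limit in \eqref{eq:variational} itself yields, in the distributional sense, the limit equation $\L_s u = 2\mu\otimes\H^{n-1}\LL\{x_n=0\}$; testing it with $u\phi^2$ (admissible by a standard density argument, using the continuity of $u$ and $u\in W^{1,2}_{\loc}(B_1,|x_n|^{1-2s})$) produces the analogous identity with $u_\epsilon$ replaced by $u$ and $\beta_\epsilon(u_\epsilon)u_\epsilon$ replaced by $u\,d\mu$. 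The first term on the right of the $\epsilon$-identity converges by weak-strong pairing. The delicate point is the boundary term: $u_\epsilon\phi^2$ converges \emph{uniformly} to $u\phi^2$ (a continuous, compactly supported function) by Theorem~\ref{thm:uniform_holder}, while $\beta_\epsilon(u_\epsilon)\rightharpoonup\mu$ weak-$*$ in $\M(B'_1)$ from (iii); this pairing gives $\int_{B'_1}\beta_\epsilon(u_\epsilon)u_\epsilon\phi^2\to \int_{B'_1}u\phi^2\,d\mu$. Comparing the two identities yields convergence of the weighted $L^2$-norms of the gradients on $\supp\phi$, which together with weak convergence upgrades to strong $L^2$-convergence on every $K\Subset B_1$, as required.
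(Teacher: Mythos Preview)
Your argument is correct, and for parts (i), (ii), (iii), (v) it matches the paper's proof almost verbatim. The only real difference is in (iv). The paper makes the simpler observation that, since $\beta_\epsilon(u_\epsilon)$ is supported on $\{u_\epsilon\leq\epsilon\}$, one has $0\le\int_{B'_1}\beta_\epsilon(u_\epsilon)u_\epsilon\eta\le\epsilon\|\eta\|_\infty\int_{B'_1}\beta_\epsilon(u_\epsilon)\to0$, so the boundary term in the $\epsilon$-identity simply vanishes in the limit; the matching identity for $u$ is then obtained not by testing the limit equation with $u\phi^2$, but by testing the \emph{homogeneous} equation in $\{u>\delta\}$ with the truncation $u^\delta=\max\{u-\delta,0\}$ and letting $\delta\to0$. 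Your route---identifying the limit of the boundary term as $\int u\phi^2\,d\mu$ via the pairing of uniformly convergent test functions with weak-$*$ convergent measures, and deriving the companion identity by testing the limit distributional equation $\L_s u=2\mu\,\H^{n-1}\LL\{x_n=0\}$ with $u\phi^2$---is also valid, and your two computations are consistent (they jointly imply $\int u\phi^2\,d\mu=0$, i.e.\ $\supp\mu\subset\{u=0\}$). The paper's variant is slightly more elementary in that it avoids the density step needed to justify $u\phi^2$ as a test function against a measure-valued right-hand side; your variant has the mild bonus of yielding that support information about $\mu$ along the way.
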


\begin{proof}
  \begin{enumerate}[(i)]
  \item For any compact $K\Subset B_1$, we know by
    Theorem~\ref{thm:uniform_holder}, $u_\epsilon $ are uniformly
    bounded in $C^{0,s}(K)$. By Ascoli-Arzela's theorem up to a
    subsequence $\epsilon_j\rightarrow 0$, we obtain a function
    $u\in C^{0,s}(K)$ such that $u_{\epsilon_j}\rightarrow u$ in
    $C^{0,\alpha}(K)$ with $0<\alpha<s$. Since $u_\epsilon\geq 0$, it
    follows that $u\geq 0$.

  \item From (i) we know that $\{u>0\}$ is open. If $u(x_0)=c>0$, from
    the uniform convergence of $u_\epsilon$ we obtain a small
    neighborhood $U$ of $x_0$ such that
    $u_\epsilon (x)\geq c/2>\epsilon $ in $U$ for every
    $0<\epsilon\leq \epsilon_0 $ for some $\epsilon_0$ small. Then
    $u_\epsilon $ solves $\div(|x_n|^{1-2s} \nabla u_\epsilon)=0$ in
    $U$ for any $0<\epsilon\leq \epsilon_0$. The statement follows
    from the uniform convergence of $u_\epsilon$ to $u$.

  \item Since $u_\epsilon$ are uniformly bounded in
    $W^{1,2}_{\loc}(B_1,|x_n|^{1-2s})$, see
    Proposition~\ref{prop:unif-W12}, by plugging in a cut-off function
    into \eqref{eq:variational} it is straightforward to see that
    $\beta_\epsilon(u_\epsilon)$ are uniformly bounded in
    $L^1_{\loc}(B_1')$.

  \item For any $0<R<1$, plugging a test function
    $\phi = u_\epsilon \eta$ in \eqref{eq:variational}, where
    $\eta \in C^{\infty}_c(\R^n)$, $\eta\geq 0$ and $\eta =0$ outside
    $B_R$, we obtain that
    \begin{align}
      \int_{B_R}|x_n|^{1-2s}|\nabla u_\epsilon |^2\eta + |x_n|^{1-2s} u_\epsilon \nabla u_\epsilon \cdot\nabla \eta
      =-\int_{B'_R} 2\beta_\epsilon (u_\epsilon)u_\epsilon \eta \label{eq:strL2}
    \end{align}
    From (iii) and the fact that $\beta_\epsilon(u_\epsilon)$ is
    supported on the set $\{u_\epsilon\leq \epsilon\}$ we have
    \begin{equation}\label{eq:RHS}
      \text{RHS  of \eqref{eq:strL2}}\rightarrow 0\quad\text{as } \epsilon \rightarrow 0.
    \end{equation}
    From Proposition~\ref{prop:unif-W12}, we know that over a sequence
    $\epsilon=\epsilon_j\to 0$,
    $|x_n|^{(1-2s)/2}\nabla u_{\epsilon_j} $ converges to
    $|x_n|^{(1-2s)/2}\nabla u$ weakly in $L^2(B_R)$.  This together
    with \eqref{eq:strL2}, \eqref{eq:RHS} and the uniform convergence
    in (i) gives us
    \begin{equation}\label{eq:L2gradient1}
      \lim_{\epsilon_j \rightarrow 0}\int_{B_R} |x_n|^{1-2s}|\nabla u_{\epsilon_j} |^2\eta = -\int_{B_R}|x_n|^{1-2s} u \nabla u \cdot\nabla \eta. 
    \end{equation}
    On the other hand, for every $\delta >0$ consider the truncation
    $u^\delta=\max\{u-\delta, 0\}$. By (ii), $u^\delta$ solves the
    homogeneous equation in $\{u>\delta\}$. Taking the test function
    $\phi=u^\delta \eta$ in \eqref{eq:variational} for
    $\epsilon_j \in (0,\delta)$, where $\eta $ is the same as above,
    and letting $\epsilon_j\rightarrow 0$, we obtain
    \begin{equation}\label{eq:delta}
      0=\int_{B_R} |x_n|^{1-2s}|\nabla u^\delta |^2 \eta +\int_{B_R}|x_n|^{1-2s} u^\delta \nabla u^\delta \cdot\nabla \eta 
    \end{equation}
    Letting $\delta \rightarrow 0+$ in \eqref{eq:delta}, we obtain
    \begin{equation}\label{eq:L2gradient2}
      \int_{B_R} |x_n|^{1-2s} |\nabla u |^2 \eta =-\int_{B_R} |x_n|^{1-2s} u \nabla u \cdot\nabla \eta.
    \end{equation}
    Comparing \eqref{eq:L2gradient1} and \eqref{eq:L2gradient2}, we
    conclude
    \begin{equation}\label{eq:normconv}
      \lim_{\epsilon_j \rightarrow 0}\int_{B_R} |x_n|^{1-2s} |\nabla u_{\epsilon_j} |^2\eta=\int_{B_R} |x_n|^{1-2s} |\nabla u |^2 \eta. 
    \end{equation}
    This together with the weak $L^2$ convergence gives (iv).

  \item Since $0\leq \B_\epsilon(u_\epsilon) \leq M$, then there
    exists a subsequence $\B_{\epsilon_j}(u_{\epsilon _j})$ and $\chi$
    such that
    \[
    \B_{\epsilon_j}(u_{\epsilon _j}) \stackrel{\ast}{\rightharpoonup}
    \chi\quad \text{in } L^\infty(B_1').\qedhere
    \]
  \end{enumerate}
\end{proof}

\begin{lemma}\label{lem:L1_chi} Let $\chi$ be as in
  Lemma~\ref{lem:compactnesslemma}. Then
$$\chi \in \{0,M\}\quad\text{for a.e.\ }x\in B_1'.$$
\end{lemma}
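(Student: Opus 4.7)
The plan is to establish $\chi(x)(M-\chi(x))=0$ for almost every $x\in B_1'$, which is equivalent to the claim. The proof will come from an integral identity combined with a fine measure estimate on the ``transition layer'' $\{0<u_\epsilon<\epsilon\}$.

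\textbf{Step 1 (Measure estimate on the transition layer).} For a compact $K'\Subset B_1'$, Lemma~\ref{lem:compactnesslemma}(iii) gives a uniform bound $\int_{K'}\beta_\epsilon(u_\epsilon)\,dx'\leq C_{K'}$. By the standing assumption $\beta>0$ on $(0,1)$, for any $0<a<b<1$ we have $\beta_\epsilon(u_\epsilon)\geq \epsilon^{-1}\min_{[a,b]}\beta$ on $\{a\epsilon<u_\epsilon<b\epsilon\}$, so Chebyshev yields
\[
\bigl|\{a\epsilon<u_\epsilon<b\epsilon\}\cap K'\bigr|\leq \frac{C_{K'}\,\epsilon}{\min_{[a,b]}\beta}\longrightarrow 0.
\]

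\textbf{Step 2 (Vanishing defect).} Next I would prove $\int_{K'}\B_\epsilon(u_\epsilon)(M-\B_\epsilon(u_\epsilon))\,dx'\to 0$. The integrand is supported in $\{0<u_\epsilon<\epsilon\}$ and bounded by $M^2/4$. For $\delta\in(0,1/2)$, I split this set according to whether $u_\epsilon/\epsilon$ lies in $(0,\delta)$, $[\delta,1-\delta]$, or $(1-\delta,1)$. On the first region $\B_\epsilon\leq \B(\delta)$ so the integrand is $\leq M\B(\delta)$; on the third region $M-\B_\epsilon\leq M-\B(1-\delta)$ so the integrand is $\leq M(M-\B(1-\delta))$; the middle region has measure $O(\epsilon)$ by Step~1. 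Sending first $\epsilon\to 0$ and then $\delta\to 0^+$ (using continuity of $\B$ with $\B(0)=0$ and $\B(1)=M$), all three contributions vanish.

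\textbf{Step 3 (Pointwise conclusion).} Passing to the geometric subsequence $\epsilon_k=2^{-k}$ makes the transition-layer measures of Step~1 summable in $k$. A Borel--Cantelli argument over countably many rational pairs $0<a<b<1$ then shows that for a.e.\ $x\in B_1'$ the ratio $u_{\epsilon_k}(x)/\epsilon_k$ eventually avoids every such interval, so $\liminf_k u_{\epsilon_k}(x)/\epsilon_k$ and $\limsup_k u_{\epsilon_k}(x)/\epsilon_k$ both lie in $\{0\}\cup[1,\infty]$; consequently every subsequential pointwise limit of $\B_{\epsilon_k}(u_{\epsilon_k})(x)$ belongs to $\{0,M\}$. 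A further subsequence from Step~2 ensures $\B_{\epsilon_k}(u_{\epsilon_k})(x)(M-\B_{\epsilon_k}(u_{\epsilon_k})(x))\to 0$ pointwise a.e., so the two accumulation values $\{0,M\}$ cannot simultaneously appear, yielding a pointwise a.e.\ limit in $\{0,M\}$. Uniqueness of the weak-$*$ limit identifies this with $\chi$, completing the proof.

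\textbf{Main obstacle.} The delicate point is Step~3. The integral identity in Step~2 alone only forces the Young measure generated by $\{\B_\epsilon(u_\epsilon)\}$ to be supported on $\{0,M\}$, which a priori allows $\chi$ to take intermediate values as a nontrivial convex combination. The fine $O(\epsilon)$ measure estimate on the transition layer, together with the uniform convergence $u_\epsilon\to u$ and a subsequential Borel--Cantelli extraction, is the crucial extra input that upgrades the weak statement to the pointwise dichotomy $\chi\in\{0,M\}$.
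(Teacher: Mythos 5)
Your Step 1 is exactly the paper's key estimate (its \eqref{eq:precom1}), and Step 2 is correct, but the argument fails at the decisive point of Step 3. The inference ``$\B_{\epsilon_k}(u_{\epsilon_k})(x)\bigl(M-\B_{\epsilon_k}(u_{\epsilon_k})(x)\bigr)\to 0$ a.e., so the two accumulation values $0$ and $M$ cannot simultaneously appear'' is false: the function $t\mapsto t(M-t)$ vanishes at \emph{both} endpoints $t=0$ and $t=M$, so a sequence oscillating between values near $0$ and values near $M$ also makes the product tend to $0$. In fact, once your Borel--Cantelli step has placed all subsequential limits of $\B_{\epsilon_k}(u_{\epsilon_k})(x)$ in $\{0,M\}$, the convergence of the product to zero is automatic and carries no additional information; it cannot upgrade ``accumulation set contained in $\{0,M\}$'' to pointwise a.e.\ convergence. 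Without a pointwise limit you cannot invoke dominated convergence and uniqueness of the weak-$*$ limit to identify that limit with $\chi$, and the proof collapses back to exactly the Young-measure statement you concede in your final paragraph is insufficient. A model example shows the gap: $f_k=M\chi_{A_k}$ with $A_k$ thin alternating stripes has all pointwise accumulation values in $\{0,M\}$, product identically $0$, yet weak-$*$ limit $M/2$; nothing in your Steps 1--3 excludes this kind of spatial oscillation of the sets $\{u_{\epsilon_k}\geq\epsilon_k\}$ inside $\{u(\cdot,0)=0\}$.

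What is missing is some control of $\B_{\epsilon_j}(u_{\epsilon_j})-\chi$ itself, e.g.\ convergence in measure, which is the ingredient the paper adds: after the transition-layer estimate it uses $\B_{\epsilon_j}(u_{\epsilon_j})\to\chi$ in $L^1(B_1')$ and bounds $|K\cap\{2\delta<\chi<M-2\delta\}|$ by $|K\cap\{|\B_{\epsilon_j}(u_{\epsilon_j})-\chi|\geq\delta\}|$ plus the measure of the transition layer, both of which vanish. A minor further point: the sequence $\epsilon_j$ is the one fixed in Lemma~\ref{lem:compactnesslemma}, so you cannot simply take $\epsilon_k=2^{-k}$; you would have to extract a further subsequence with summable layer measures, which is harmless --- but the decisive defect is the oscillation issue above, which your Borel--Cantelli/product argument does not address.
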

\begin{proof}
  Given $0<\delta \ll M/4$ and $K\Subset B_1'$, let $\tilde{\delta}_1$
  and $\tilde{\delta}_2$ be the (unique) positive numbers satisfying
$$\int_0^{\tilde{\delta}_1}\beta(s)ds=\int_{1-\tilde{\delta}_2}^1 \beta(s)ds =\delta.$$
Then we have
\begin{equation}
  \label{eq:precom1}
  \begin{aligned}
    |K\cap \{\delta < \B_{\epsilon_j}(u_{\epsilon_j})<M-\delta\}|&=|K\cap \{\tilde{\delta}_1< \frac{u_{\epsilon_j}}{\epsilon_j}<1-\tilde{\delta}_2\}|\\
    & \leq |K\cap \{\beta_{\epsilon_j}(u_{\epsilon_j})\geq \frac{1}{\epsilon_j}\min_{[\tilde{\delta}_1,1-\tilde{\delta}_2]}\beta\}|\\
    &\leq\frac{\epsilon_j}{\min_{[\tilde{\delta}_1,1-\tilde{\delta}_2]}\beta}\int_{K}\beta_{\epsilon_j}(u_{\epsilon_j})\rightarrow
    0\quad \text {as } \epsilon_j\rightarrow 0,
  \end{aligned}
\end{equation}
where we have used Lemma~\ref{lem:compactnesslemma}(iii) and the
assumption that $\beta >0$ in $(0,1)$.  Hence if we let
$A_{\delta, K} := K\cap \{2\delta < \chi < M-2\delta\}$, then
\begin{align*}
  |A_{\delta,K}|&\leq |A_{\delta, K}\cap \{ \B_{\epsilon_j}(u_{\epsilon_j})\leq \delta \text{ or } \B_{\epsilon_j}(u_{\epsilon_j}) \geq M-\delta\}|\\
                &\qquad+|A_{\delta, K}\cap \{\delta < \B_{\epsilon_j}(u_{\epsilon_j})<M-\delta\}|\\
                &\leq |K\cap \{ |\B_{\epsilon_j}(u_{\epsilon_j})-\chi |\geq \delta \}|+|A_{\delta, K}\cap \{\delta < \B_{\epsilon_j}(u_{\epsilon_j})<M-\delta\}|.
\end{align*}    
By Lemma~\ref{lem:compactnesslemma}(v),
$\B_{\epsilon_j}(u_{\epsilon_j})\stackrel{\ast}{\rightharpoonup} \chi$
in $L^\infty(B_1')$, and moreover $0\leq \B_{\epsilon_j} \leq M$ for
all $j$, thus $\B_{\epsilon_j}(u_{\epsilon_j})\rightarrow \chi$ in
$L^1(B_1')$. This implies that
$|K\cap \{|\B_{\epsilon_j}(u_{\epsilon_j})-\chi |\geq \delta
\}|\rightarrow 0$
as $j\rightarrow \infty$. This combined with \eqref{eq:precom1} yields
that passing to the limit $j\rightarrow \infty$, $|A_{\delta, K}|=0$.
Because $\delta$ and $K$ are arbitrary, we have $\chi \in \{0,M\}$ for
a.e. $x\in B_1'$.
\end{proof}

The following lemma will play a crucial role in the paper. The proof
follows the lines of Lemma 3.2 in \cite{CLW} and is therefore omitted.

\begin{lemma}[Blowups at free boundary points]\label{lem:compactnesslemma2}
  Let $u_{\epsilon_j}\rightarrow u$ uniformly on compact subsets of
  $B_1$, and $\B_{\epsilon_j}\stackrel{\ast}{\rightharpoonup}\chi$ in
  $L^\infty(B_1')$, as in Lemma~\ref{lem:compactnesslemma}. For
  $x_0\in\F_u=\partial\{u(\cdot,0)>0\}\cap B_1'$ and $\lambda >0$,
  consider the following rescalings
  \begin{align*}
    u_{\lambda}^{x_0}(x)&:=\frac{1}{\lambda^{s}}u(x_0+\lambda x),\\
    u_{\epsilon,\lambda}^{x_0}(x)&:=\frac{1}{\lambda^{s}}u_{\epsilon}(x_0+\lambda x),\\
    \chi_{\lambda}^{x_0}(x')&:=\chi(x_0+\lambda x').
  \end{align*}
  Assume that there exists $\lambda_k\rightarrow 0$ such that
  $u_{\lambda_k}^{x_0}\rightarrow U$ as $k\rightarrow \infty$
  uniformly on compact subsets of $\R^n$ and
  $\chi_{\lambda_k}^{x_0}\stackrel{\ast}{\rightharpoonup} \chi_0$ in
  $L^\infty(\R^{n-1})$.  Then there exists $j(k)\rightarrow \infty$
  such that for every $j_k\geq j(k)$ we have that
  $(\epsilon_{j_k}/\lambda_k^{s})\rightarrow 0$ and
  \begin{enumerate}[\upshape (i)]
  \item $u_{\epsilon _{j_k},\lambda_k}^{x_0}\rightarrow U$ uniformly
    on compact subsets of $\R^{n}$
  \item
    $|x_n|^{(1-2s)/2}\nabla u_{\epsilon
      _{j_k},\lambda_k}^{x_0}\rightarrow |x_n|^{(1-2s)/2}\nabla U$
    in $L^2_{\loc}(\R^{n})$
  \item
    $\B_{\epsilon_{j_k}/\lambda_k^s}(u_{\epsilon_{j_k},\lambda_k}^{x_0})\stackrel{\ast}{\rightharpoonup}
    \chi_0$ in $L^\infty(\R^{n-1})$
  \item
    $|x_n|^{(1-2s)/2}\nabla u_{\lambda_k}^{x_0}\rightarrow
    |x_n|^{(1-2s)/2}\nabla U$ in $L^2_{\loc}(\R^{n})$.
  \end{enumerate}
\end{lemma}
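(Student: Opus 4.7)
The plan is a Cantor diagonal extraction from the two-parameter family $\{u_{\epsilon_j,\lambda_k}^{x_0}\}_{j,k}$. The key observation is that for each fixed $k$, the rescaling $u_{\epsilon_j,\lambda_k}^{x_0}(x)=\lambda_k^{-s}u_{\epsilon_j}(x_0+\lambda_k x)$ is a solution of the singular perturbation problem with parameter $\tilde\epsilon_j=\epsilon_j/\lambda_k^s$ on $B_{(1-|x_0|)/\lambda_k}$, and $\tilde\epsilon_j\to 0$ as $j\to\infty$. From the uniform convergence $u_{\epsilon_j}\to u$ on compact subsets of $B_1$, we have $u_{\epsilon_j,\lambda_k}^{x_0}\to u_{\lambda_k}^{x_0}$ uniformly on compact subsets of $\R^n$ as $j\to\infty$, and a change of variables applied to Lemma~\ref{lem:compactnesslemma}(v) yields
\[
  \B_{\tilde\epsilon_j}(u_{\epsilon_j,\lambda_k}^{x_0})(x')=\B_{\epsilon_j}(u_{\epsilon_j})(x_0+\lambda_k x')\stackrel{\ast}{\rightharpoonup}\chi_{\lambda_k}^{x_0}\quad\text{in }L^\infty_{\loc}(\R^{n-1}).
\]
Applying Lemma~\ref{lem:compactnesslemma} to $\{u_{\epsilon_j,\lambda_k}^{x_0}\}_j$ and invoking uniqueness of the limit, we additionally obtain the strong convergence $|x_n|^{(1-2s)/2}\nabla u_{\epsilon_j,\lambda_k}^{x_0}\to|x_n|^{(1-2s)/2}\nabla u_{\lambda_k}^{x_0}$ in $L^2_{\loc}(\R^n)$ as $j\to\infty$; this will be needed for (ii).

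Next, I would prove conclusion (iv) independently, since the blowup sequence $u_{\lambda_k}^{x_0}\to U$ is not directly associated with any perturbation. The approach is to mimic the proof of Lemma~\ref{lem:compactnesslemma}(iv) at the level of limit solutions. The measure $\mu$ from Lemma~\ref{lem:compactnesslemma}(iii) is supported on $\{u(\cdot,0)=0\}$, because $\beta_{\epsilon_j}(u_{\epsilon_j})$ is supported in $\{u_{\epsilon_j}\leq\epsilon_j\}$; consequently the rescaled measures associated with $u_{\lambda_k}^{x_0}$ are supported on $\{u_{\lambda_k}^{x_0}(\cdot,0)=0\}$, and this support property passes to any weak-$\ast$ limit measure for $U$ by the uniform convergence $u_{\lambda_k}^{x_0}\to U$. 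Testing the equations against $u_{\lambda_k}^{x_0}\eta$ and $U\eta$ (with $\eta\in C_c^\infty(\R^n)$) and using that the measure terms vanish on the support of the test functions then gives
\[
  \int|x_n|^{1-2s}|\nabla u_{\lambda_k}^{x_0}|^2\eta\,dx=-\int|x_n|^{1-2s}u_{\lambda_k}^{x_0}\nabla u_{\lambda_k}^{x_0}\cdot\nabla\eta\,dx,
\]
together with the analogous identity for $U$. Passing to the limit using the uniform convergence $u_{\lambda_k}^{x_0}\to U$ and the weak $L^2$ convergence of the weighted gradients then yields convergence of the weighted $L^2$ norms, which combined with the weak convergence gives (iv).

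With (iv) secured and the fixed-$k$ convergences from the first paragraph available, the proof concludes with a standard diagonal extraction. Fixing countable dense families $\{\phi_{k,i}\}_{i\geq 1}\subset L^1(B_k')$ for each $k$, one can pick $j(k)\to\infty$ so that, for every $j_k\geq j(k)$,
\begin{gather*}
  \epsilon_{j_k}/\lambda_k^s<1/k,\qquad \|u_{\epsilon_{j_k},\lambda_k}^{x_0}-u_{\lambda_k}^{x_0}\|_{L^\infty(B_k)}<1/k,\\
  \bigl\||x_n|^{(1-2s)/2}(\nabla u_{\epsilon_{j_k},\lambda_k}^{x_0}-\nabla u_{\lambda_k}^{x_0})\bigr\|_{L^2(B_k)}<1/k,\\
  \Bigl|\int_{B_k'}\bigl(\B_{\tilde\epsilon_{j_k}}(u_{\epsilon_{j_k},\lambda_k}^{x_0})-\chi_{\lambda_k}^{x_0}\bigr)\phi_{k,i}\,dx'\Bigr|<1/k\quad\text{for }i=1,\ldots,k.
\end{gather*}
Combining these bounds with the hypotheses $u_{\lambda_k}^{x_0}\to U$, $\chi_{\lambda_k}^{x_0}\stackrel{\ast}{\rightharpoonup}\chi_0$, and with conclusion (iv), the triangle inequality then delivers (i), (ii), and (iii).

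The main obstacle is (iv): one must correctly identify $U$ as a weak solution carrying a nonnegative measure on the contact set $\{U(\cdot,0)=0\}\cap\{x_n=0\}$ and verify that the support property is preserved under two successive weak-$\ast$ limits (first $\epsilon_j\to 0$, then $\lambda_k\to 0$), so that the test-function energy identity used in Lemma~\ref{lem:compactnesslemma}(iv) remains available in the limit. Once this structural step is in hand, the remainder of the argument is bookkeeping in the diagonal extraction.
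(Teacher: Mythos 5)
Your diagonal-extraction scheme is essentially the argument the paper has in mind: the paper omits the proof and refers to Lemma~3.2 of \cite{CLW}, which is precisely this two-parameter diagonalization, and your fixed-$k$ ingredients (uniform convergence $u^{x_0}_{\epsilon_j,\lambda_k}\to u^{x_0}_{\lambda_k}$, strong convergence of the weighted gradients via Lemma~\ref{lem:compactnesslemma} plus uniqueness of the limit, and the change-of-variables identity $\B_{\epsilon_j/\lambda_k^s}(u^{x_0}_{\epsilon_j,\lambda_k})(x')=\B_{\epsilon_j}(u_{\epsilon_j})(x_0+\lambda_k x')$) are correct. Where you genuinely deviate is (iv): you prove it independently by equipping $U$ with the structure of a weak solution carrying a nonnegative measure supported on $\{U(\cdot,0)=0\}$ and rerunning the energy identity. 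This can be made rigorous, but it needs more than you state: uniform local $W^{1,2}(\,\cdot\,,|x_n|^{1-2s})$ bounds for $u^{x_0}_{\lambda_k}$ and uniform local mass bounds for the rescaled measures (both follow from a Caccioppoli estimate together with $u\le Cr^s$ on $B_r(x_0)$, since $u(x_0)=0$ and $u\in C^{0,s}_{\loc}$), and a truncation step $U^\delta=\max(U-\delta,0)$ to justify testing with $U\eta$, exactly as in the proof of Lemma~\ref{lem:compactnesslemma}(iv). It is also an avoidable detour: choose $j(k)$ using only the fixed-$k$ convergences; then the diagonal sequence $u^{x_0}_{\epsilon_{j_k},\lambda_k}$ consists of solutions of \eqref{eq:P_eps} with parameters $\tilde\epsilon_k=\epsilon_{j_k}/\lambda_k^s\to 0$ on exhausting balls, converging uniformly to $U$ by (i); applying Lemma~\ref{lem:compactnesslemma}(iv) to this sequence (with the usual subsequence/uniqueness argument) gives (ii) directly, and (iv) then follows from (ii) and the $1/k$-closeness of $\nabla u^{x_0}_{\epsilon_{j_k},\lambda_k}$ to $\nabla u^{x_0}_{\lambda_k}$ by the triangle inequality, with no need to analyze the limit measure for $U$. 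One bookkeeping fix in your extraction for (iii): the dense test families should not vary with $k$ (as written, a given $\phi\in L^1(B_R')$ need not be close to any $\phi_{k,i}$ with $i\le k$); take a single countable family $\{\phi_i\}$ dense in $L^1(B_m')$ for every $m$ and require closeness for $\phi_1,\dots,\phi_k$ at stage $k$, which together with the uniform bound $0\le\B_{\tilde\epsilon}\le M$ yields weak-$*$ convergence against all of $L^1(\R^{n-1})$.
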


We will call the function $U$ (or the pair $(U,\chi_0)$) a
\emph{blowup} of $u$ (or the pair $(u,\chi)$) at $x_0$. Note that the
above lemma says that $(U,\chi_0)$ is a limit solution pair on any
ball $B_R$, $R>0$.

\subsection{Solutions in the sense of domain variation}

We say that the function
$u_\epsilon\in W^{1,2}_{\loc}(B_1,|x_n|^{1-2s})$ is a
\emph{domain-variation solution} of $(P_\epsilon)$, if it satisfies
\begin{equation}\label{eq:domain-variation-eps}
  \int_{B_1}|x_n|^{1-2s}[-\nabla u_\epsilon \otimes \nabla u_\epsilon : \nabla \psi + \frac{1}{2}|\nabla u_\epsilon |^2 \div{\psi}] +\int_{B_1'} 2\B_\epsilon (u_\epsilon) \div{\psi} =0
\end{equation}
for every smooth vector field $\psi \in C^{\infty}_c(B_1; \R^n)$ with
$\psi(B_1')\subset\R^{n-1}$.  The name comes from the fact that the
equation \eqref{eq:domain-variation-eps} is equivalent to the
condition
$$\frac{d}{d\tau} J_\epsilon(u(x+\tau \psi(x)))\big|_{\tau=0}=0,$$
where
$$J_\epsilon(v)=\frac{1}{2}\int_{B_1} |x_n|^{1-2s}|\nabla v|^2 +
\int_{B_1'} 2\B_\epsilon (v)
$$
is the energy associated with \eqref{eq:P_eps}.  In particular, we see
that the weak solutions of \eqref{eq:variational} are also
domain-variation solutions.

Now, the advantage of the domain-variation solutions is as follows: if
$u_\epsilon$ is a weak solution of \eqref{eq:P_eps}, then by the
compactness Lemma~\ref{lem:compactnesslemma}, the limit solution pair
$(u,\chi)$ over any $\epsilon=\epsilon_j\to 0$ satisfies
\begin{equation}\label{eq:domain-variation}
  \int_{B_1}|x_n|^{1-2s}[-\nabla u \otimes \nabla u : \nabla \psi + \frac{1}{2}|\nabla u |^2 \div{\psi}] +\int_{B_1'} 2\chi  \div{\psi} =0,
\end{equation}
for every smooth vector field $\psi \in C^{\infty}_c(B_1; \R^n)$ with
$\psi(B_1')\subset\R^{n-1}$. While we could pass to the limit also in
the weak formulation \eqref{eq:variational}, the additional
information on $\chi$ that we have from Lemma~\ref{lem:L1_chi} will be
important in the sequel.

\subsection{Weiss-type monotonicity formula} In this section we prove
monotonicity formulas for the solution $u_\epsilon$ of
\eqref{eq:P_eps}, and the limit solution pair $(u,\chi)$ for
\eqref{eq:freebdry}.  This kind of formula has been first used by
Weiss \cite{Weiss} in the ``thick'' counterpart of our problem, as
well is the Alt-Caffarelli problem \cite{Weiss2}.

\begin{theorem}[Monotonicity formula for \eqref{eq:P_eps}]
  \label{thm:monotonicity}
  Let $x_0\in B_1'$ and $u_\epsilon$ be a solution to \eqref{eq:P_eps}
  with $\|u_\epsilon\|_{L^\infty(B_2)}\leq L$. For $0<r<1-|x_0|$, let
  \begin{align*}
    \Psi_\epsilon^{x_0}( u_\epsilon,r)&=\frac{1}{r^{n-1}}\int_{B_r(x_0)}
                                        |x_n|^{1-2s}|\nabla
                                        u_\epsilon|^2-\frac{s}{r^{n}}\int_{\partial
                                        B_r(x_0)}|x_n|^{1-2s}u_\epsilon^2\\
                                      &\qquad +\frac{1}{r^{n-1}}\int_{B'_r(x_0)}4\B_\epsilon (u_\epsilon).
  \end{align*}
  Then $r\mapsto \Psi_\epsilon^ {x_0}(u_\epsilon,r)$ is a
  nondecreasing function of $r$.
\end{theorem}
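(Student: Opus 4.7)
My plan is to reduce to $x_0 = 0$ by translation invariance and then rewrite $\Psi_\epsilon^{0}(u_\epsilon,r)$ via the $s$-homogeneous rescaling $v_r(x) := u_\epsilon(rx)/r^s$. With $\epsilon_r := \epsilon r^{-s}$, the change of variables $y = rx$ together with the identity $\B_\epsilon(u_\epsilon(rx')) = \B(v_r(x')/\epsilon_r) = \B_{\epsilon_r}(v_r(x'))$ turns the three pieces of $\Psi_\epsilon^{0}$ into
\[
\Psi_\epsilon^{0}(u_\epsilon, r) = \int_{B_1}|x_n|^{1-2s}|\nabla v_r|^2 - s\int_{\partial B_1}|x_n|^{1-2s}v_r^2 + 4\int_{B_1'}\B_{\epsilon_r}(v_r),
\]
with $v_r$ satisfying $\div(|x_n|^{1-2s}\nabla v_r) = 2\beta_{\epsilon_r}(v_r)\,\H^{n-1}\LL\{x_n=0\}$ on the rescaled ball. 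Monotonicity thereby reduces to showing that the $r$-derivative of the right-hand side is nonnegative.

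The computation will rest on two elementary identities: the Euler-type relation $r\partial_r v_r = x\cdot\nabla v_r - sv_r$ (in particular $\partial_\nu v_r - sv_r = r\partial_r v_r$ on $\partial B_1$), and $\partial_r\epsilon_r = -s\epsilon_r/r$, which yields $\partial_r\B_{\epsilon_r}(v_r) = \beta_{\epsilon_r}(v_r)(\partial_r v_r + sv_r/r)$. I would then test the equation for $v_r$ with $\partial_r v_r$ on $B_1$ (for a.e.\ $r$, where the slicing makes sense), obtaining
\[
\int_{B_1}|x_n|^{1-2s}\nabla v_r\cdot\nabla(\partial_r v_r) = -2\int_{B_1'}\beta_{\epsilon_r}(v_r)\,\partial_r v_r + \int_{\partial B_1}|x_n|^{1-2s}(\partial_\nu v_r)(\partial_r v_r).
\]
Two cancellations then do the work. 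On the thin set, the contribution $-4\int_{B_1'}\beta_{\epsilon_r}(v_r)\partial_r v_r$ from doubling the previous display combines with $4\int_{B_1'}\beta_{\epsilon_r}(v_r)(\partial_r v_r + sv_r/r)$ from differentiating the $\B_{\epsilon_r}$ term, collapsing to $(4s/r)\int_{B_1'}v_r\beta_{\epsilon_r}(v_r)$. On $\partial B_1$, the mixed boundary term from the integration by parts combines with the $r$-derivative of $-s\int|x_n|^{1-2s}v_r^2$, and the identity $\partial_\nu v_r - sv_r = r\partial_r v_r$ turns the sum into a perfect square $2r\int_{\partial B_1}|x_n|^{1-2s}(\partial_r v_r)^2$.

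Adding the two nonnegative pieces gives
\[
\frac{d}{dr}\Psi_\epsilon^{0}(u_\epsilon,r) = 2r\int_{\partial B_1}|x_n|^{1-2s}(\partial_r v_r)^2 + \frac{4s}{r}\int_{B_1'}v_r\beta_{\epsilon_r}(v_r)\geq 0,
\]
since $v_r\geq 0$ (as $u_\epsilon\geq 0$) and $\beta\geq 0$. The main technical obstacle will be the rigorous justification of the integration by parts: I must verify that $\partial_r v_r$ is an admissible test function for a.e.\ $r$, that the slice $\int_{\partial B_r}|x_n|^{1-2s}|\nabla u_\epsilon|^2$ is finite on a set of full measure, and that $r\mapsto\Psi_\epsilon^{0}(u_\epsilon,r)$ is absolutely continuous so that nonnegativity of the derivative a.e.\ yields monotonicity. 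These are standard consequences of the uniform $W^{1,2}(B_1,|x_n|^{1-2s})$ bound of Proposition~\ref{prop:unif-W12} via coarea/Fubini, but require some care owing to the degenerate weight at $\{x_n=0\}$.
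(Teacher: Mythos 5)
Your proposal is correct and follows essentially the same route as the paper: rescale to $v_r=u_\epsilon(x_0+r\,\cdot)/r^s$, differentiate in $r$, test the equation with $\partial_r v_r$, use the chain rule on $\B_{\epsilon_r}(v_r)$ and the identity $\partial_\nu v_r-sv_r=r\partial_r v_r$ to obtain a nonnegative boundary square plus the nonnegative thin term $\tfrac{4s}{r}\int_{B_1'}v_r\beta_{\epsilon_r}(v_r)$. The only cosmetic difference is that you keep this thin term in an exact identity while the paper simply discards it as an inequality, and you flag the measure-theoretic justification of the $r$-differentiation, which the paper treats as routine.
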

\begin{proof}
  For $0<r<1-|x_0|$ consider the rescalings
$$
u_{\epsilon,r}(x):=\frac{u_\epsilon (x_0+rx)}{r^{s}}.
$$
Then
\begin{equation*}
  \Psi^{x_0}_\epsilon (u_\epsilon,r)=\Psi^{0}_\epsilon(u_{\epsilon,r},1)=\int_{B_1} |x_n|^{1-2s} |\nabla u_{\epsilon,r}|^2-s\int_{\partial B_1} |x_n|^{1-2s}u_{\epsilon,r}^2 + \int_{B'_1}4\B_\epsilon (r^{s}u_{\epsilon,r}).
\end{equation*}
Thus
\begin{align*}
  \frac{d}{dr}\Psi^{x_0}_\epsilon (u_\epsilon,r)& = \int_{B_1}2|x_n|^{1-2s}\nabla u_{\epsilon,r}\cdot \nabla \frac{d}{dr}u_{\epsilon,r}-2s\int_{\partial B_1} |x_n|^{1-2s}u_{\epsilon,r}\frac{d}{dr} u_{\epsilon,r}\\
                                                &\qquad +\int_{B'_1}4\beta_\epsilon(r^{s}u_{\epsilon,r})\left(sr^{s-1}u_{\epsilon,r}+r^{s}\frac{d}{dr}u_{\epsilon,r}\right).
\end{align*}
Now, noting that $u_{\epsilon,r}$ solves
\begin{align*}
  \div(|x_n|^{1-2s}\nabla u_{\epsilon,r})
  &=2r^{s}\beta_\epsilon(r^{s}u_{\epsilon,r})\H^{n-1}\LL{\{x_n=0\}}\quad\text{in
    } B_{1/r}
\end{align*}
and integrating by parts, using also the nonnegativity of
$\beta_\epsilon(u_\epsilon)u_\epsilon$, we have
\begin{align*}
  \frac{d}{dr}\Psi^{x_0}_\epsilon &=2\int_{\partial B_1}|x_n|^{1-2s}\left(\partial_\nu u_{\epsilon,r}-su_{\epsilon,r}\right)\frac{d}{dr}u_{\epsilon,r}+\int_{B'_1}4\beta_\epsilon(r^{s}u_{\epsilon,r})sr^{s-1}u_{\epsilon,r}\\
                                  &\geq  2\int_{\partial B_1}|x_n|^{1-2s}\left(\partial_\nu u_{\epsilon,r}-su_{\epsilon,r}\right)\frac{d}{dr}u_{\epsilon,r}.
\end{align*}
Observing that for $x\in \partial B_1$,
\begin{align*}
  \frac{d}{dr}u_{\epsilon,r}(x)&=r^{-(1+s)}\left((rx)\cdot\nabla u_\epsilon (x_0+rx)-su_\epsilon(x_0+rx)\right),\\
  \partial_\nu u_{\epsilon,r}(x)&=x\cdot \nabla u_{\epsilon,r}(x)=r^{-s}(rx)\cdot \nabla u_\epsilon(x_0+rx),
\end{align*}
we then obtain
\begin{align*}
  \frac{d}{dr}\Psi^{x_0}_\epsilon(u_\epsilon,r)\geq \frac{2}{r^{n+1}}\int_{\partial B_r(x_0)}|x_n|^{1-2s}\left((x-x_0)\cdot \nabla u_{\epsilon}-su_\epsilon\right)^2\geq 0.
\end{align*}
This implies that $r\mapsto \Psi^{x_0}_\epsilon(u_\epsilon,r)$ is
monotonically nondecreasing.
\end{proof}

By the compactness Lemma~\ref{lem:compactnesslemma}, passing to the
limit in a subsequence $\epsilon _j$ we get the following monotonicity
formula for the limit pair $(u, \chi)$. Similar monotonicity formula
was used in the thin and fractional Alt-Caffarelli problems in
\cite{AP} and \cite{Allen}, respectively.

\begin{theorem}[Monotonicity formula for \eqref{eq:freebdry}]
  \label{thm:monotonicity2}
  Let $(u,\chi)$ be a limit solution pair and $x_0\in B_1'$.  For
  $0<r<1-|x_0|$, let
  \begin{align*}
    \Psi^{x_0}(u,r)&=\frac{1}{r^{n-1}}\int_{B_r(x_0)} |x_n|^{1-2s}|\nabla
                     u|^2-\frac{s}{r^{n}}\int_{\partial
                     B_r(x_0)}|x_n|^{1-2s}u^2\\
                   &\qquad +\frac{1}{r^{n-1}}\int_{B'_r(x_0)}4\chi.
  \end{align*}
  Then $r\mapsto \Psi^{x_0}(u,r)$ is monotonically nondecreasing. More
  precisely, for $0<\rho<\sigma <1-|x_0|$,
  \begin{align*}
    \Psi^{x_0}(u,\sigma)-\Psi^{x_0}(u,\rho)\geq \int_{\rho}^{\sigma}\frac{2}{r^{n+1}}\int_{\partial B_r(x_0)}|x_n|^{1-2s}\left((x-x_0)\cdot \nabla u-su\right)^2 d\sigma_r dr\geq 0.
  \end{align*}
  In particular, the limit
  $\Psi^{x_0}(u,0+)=\lim_{r\to 0+}\Psi^{x_0}(u,r)$ exists.
\end{theorem}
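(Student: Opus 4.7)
The plan is to derive the monotonicity of $\Psi^{x_0}(u,r)$ by passing to the limit $\epsilon_j\to 0$ in the corresponding inequality for $u_\epsilon$ from Theorem~\ref{thm:monotonicity}. From the last computation in the proof of that theorem we have the pointwise inequality
\[
\frac{d}{dr}\Psi^{x_0}_\epsilon(u_\epsilon,r)\ge \frac{2}{r^{n+1}}\int_{\partial B_r(x_0)}|x_n|^{1-2s}\bigl((x-x_0)\cdot\nabla u_\epsilon-s u_\epsilon\bigr)^2\,d\sigma_r,
\]
which I first integrate over an arbitrary interval $[\rho,\sigma]\subset(0,1-|x_0|)$ to obtain the integrated form. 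The goal is then to show that each term converges along the subsequence $\epsilon_j\to 0$ supplied by Lemma~\ref{lem:compactnesslemma}, producing the analogous integrated inequality with $u$ and $\chi$ in place of $u_\epsilon$ and $\B_\epsilon(u_\epsilon)$.

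For the left-hand side, fix any $r\in(0,1-|x_0|)$. The bulk term $\int_{B_r(x_0)}|x_n|^{1-2s}|\nabla u_\epsilon|^2$ converges to $\int_{B_r(x_0)}|x_n|^{1-2s}|\nabla u|^2$ by the strong $L^2(|x_n|^{1-2s})$ convergence of the gradients from Lemma~\ref{lem:compactnesslemma}(iv). The spherical integral $\int_{\partial B_r(x_0)}|x_n|^{1-2s} u_\epsilon^2$ converges to $\int_{\partial B_r(x_0)}|x_n|^{1-2s} u^2$ by the uniform convergence $u_\epsilon\to u$ on compact subsets of $B_1$ (Lemma~\ref{lem:compactnesslemma}(i)) together with dominated convergence, since $|x_n|^{1-2s}$ is integrable on the sphere for $s\in(0,1)$. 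The thin-set integral $\int_{B'_r(x_0)}\B_\epsilon(u_\epsilon)$ converges to $\int_{B'_r(x_0)}\chi$ by testing the weak-$*$ convergence of Lemma~\ref{lem:compactnesslemma}(v) against the indicator $\chi_{B'_r(x_0)}\in L^1(B_1')$.

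For the right-hand side I use Fubini to rewrite the integral over $[\rho,\sigma]$ as a volume integral
\[
\int_{B_\sigma(x_0)\setminus B_\rho(x_0)}\frac{2\,|x_n|^{1-2s}}{|x-x_0|^{n+1}}\bigl((x-x_0)\cdot\nabla u_\epsilon-s u_\epsilon\bigr)^2\,dx.
\]
Since $|x-x_0|^{-(n+1)}$ is bounded on this annulus, it suffices to observe that $|x_n|^{(1-2s)/2}\bigl((x-x_0)\cdot\nabla u_\epsilon-s u_\epsilon\bigr)$ converges strongly in $L^2$ on $B_\sigma(x_0)$: the gradient piece converges by Lemma~\ref{lem:compactnesslemma}(iv) multiplied by the bounded factor $(x-x_0)$, and the $s u_\epsilon$ piece converges uniformly with an integrable dominating weight. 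Squaring yields $L^1$ convergence on the annulus, so the RHS passes to the limit as an equality. Combining the two limits gives the claimed integrated inequality for every $0<\rho<\sigma<1-|x_0|$, which implies both the monotonicity of $r\mapsto\Psi^{x_0}(u,r)$ and the existence of $\Psi^{x_0}(u,0+)$. The step that carries essentially all the analytic weight is the strong $L^2$ convergence of the weighted gradients in Lemma~\ref{lem:compactnesslemma}(iv); without it one would only recover a liminf on the RHS, still enough for monotonicity but insufficient to identify the limit integrand with the correct expression in $u$.
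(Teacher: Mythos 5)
Your proof is correct and follows exactly the route the paper indicates: integrate the differential inequality from Theorem~\ref{thm:monotonicity} over $[\rho,\sigma]$ and pass to the limit $\epsilon_j\to 0$ using Lemma~\ref{lem:compactnesslemma} (strong weighted $L^2$ convergence of gradients for the bulk and right-hand-side terms, uniform convergence for the spherical term, weak-$*$ convergence of $\B_{\epsilon_j}(u_{\epsilon_j})$ for the thin term). The paper leaves these limiting details implicit, and you have supplied them correctly.
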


We will call the quantity $\Psi^{x_0}(u,0+)$ the \emph{Weiss energy of
  $u$ at $x_0$}.

\medskip Next we prove a corollary of the monotonicity formula
above. For notation convenience, sometimes we write the dependence of
$\Psi$ on $(u,\chi)$ explicitly, i.e.\ we write $\Psi(u,\chi, r)$
instead of $\Psi(u,r)$.

\begin{corollary}\label{cor:homoblowup}
  Let $(u,\chi)$ be a limit solution pair and
  $x_0\in \F_u=\partial\{u(\cdot,0)>0\}\cap B_1'$. Then
  \begin{enumerate}[\upshape (i)]
  \item $\Psi^{x_0}(u, r)\geq -C$ for any $r>0$.
  \item Suppose for $\lambda_j\rightarrow 0$ a blowup sequence
    $(u_{\lambda_j}, \chi_{\lambda_j})$ (as in
    Lemma~\ref{lem:compactnesslemma2}) satisfies
    \begin{align*}
      u_{\lambda_j}\rightarrow u_0&\quad\text{uniformly on compact subsets of } \R^{n},\\
      \chi_{\lambda_j}\stackrel{\ast}{\rightharpoonup} \chi_0&\quad\text{in } L^\infty(\R^{n-1}).
    \end{align*}
    Then $\Psi^0(u_0,\chi_0,r)$ is constant in $r$. Moreover,
$$
\Psi^{0}(u_0,\chi_0, r) = \Psi^{x_0}(u,\chi,0+)= \int_{B'_1}4\chi_0.
$$
In particular, $\Psi^{x_0}(u,0+)\geq 0$.

\item $u_0$ is a homogeneous function of degree $s$, i.e.
 $$u_0(\lambda x)=\lambda ^{s}u_0(x), \quad \text{for a.e. } x\in \R^n, \lambda \geq 0.$$
\end{enumerate}
\end{corollary}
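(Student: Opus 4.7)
For part (i), I plan to exploit the fact that $x_0\in\F_u$ forces $u(x_0,0)=0$ by continuity; passing the uniform $s$-H\"older estimate of Theorem~\ref{thm:uniform_holder} to the limit via Lemma~\ref{lem:compactnesslemma}(i) yields the pointwise bound $u(x)\leq C|x-x_0|^s$ near $x_0$. In $\Psi^{x_0}(u,r)$ the gradient and $\chi$-integrals are nonnegative, while the scaling $\int_{\partial B_r(x_0)}|x_n|^{1-2s}\,d\sigma\asymp r^{n-2s}$ combined with the H\"older bound controls the negative spherical term by a universal constant, giving $\Psi^{x_0}(u,r)\ge -C$.

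For the constancy claim in (ii), the scaling identity $\Psi^0(u_\lambda,\chi_\lambda,r)=\Psi^{x_0}(u,\chi,\lambda r)$ follows from a direct change of variables. Fixing $r>0$ and sending $\lambda_j\to 0$, Theorem~\ref{thm:monotonicity2} forces $\Psi^{x_0}(u,\chi,\lambda_j r)\to\Psi^{x_0}(u,\chi,0+)$, while Lemma~\ref{lem:compactnesslemma2} supplies $\Psi^0(u_{\lambda_j},\chi_{\lambda_j},r)\to\Psi^0(u_0,\chi_0,r)$: the gradient piece uses the strong $L^2(|x_n|^{1-2s})$ convergence, the spherical piece uses the locally uniform convergence of $u_{\lambda_j}$, and the $\chi$-piece uses the weak-$*$ convergence of $\chi_{\lambda_j}$ tested against the indicator of $B_r'$. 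Therefore $\Psi^0(u_0,\chi_0,r)\equiv\Psi^{x_0}(u,\chi,0+)$ for every $r>0$. Part (iii) is then an immediate consequence of Theorem~\ref{thm:monotonicity2}: constancy of $\Psi^0(u_0,\chi_0,\cdot)$ forces $\int_{\partial B_r}|x_n|^{1-2s}(x\cdot\nabla u_0-su_0)^2\,d\sigma_r=0$ for a.e.~$r$, so $x\cdot\nabla u_0=su_0$ a.e., which by Euler's identity means $u_0$ is $s$-homogeneous.

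The main obstacle is the identification $\Psi^{x_0}(u,\chi,0+)=\int_{B_1'}4\chi_0$. Using (iii), a change of variables makes $r^{-(n-1)}\int_{B_r}|x_n|^{1-2s}|\nabla u_0|^2$ and $sr^{-n}\int_{\partial B_r}|x_n|^{1-2s}u_0^2$ each independent of $r$, and the heart of the argument is to show these two quantities coincide so the two $u_0$-pieces in $\Psi^0(u_0,\chi_0,r)$ cancel. For this I would apply the divergence theorem to the vector field $|x_n|^{1-2s}u_0\nabla u_0$ on $B_1$: the flux through $\partial B_1$ equals $s\int_{\partial B_1}|x_n|^{1-2s}u_0^2$ via Euler's identity $\partial_\nu u_0=su_0$, while the interior divergence contributes $\int_{B_1}|x_n|^{1-2s}|\nabla u_0|^2+\int_{B_1'}2u_0\,d\mu$, where $\mu$ is the weak-$*$ limit in measures of $\beta_{\epsilon_{j_k}/\lambda_k^s}(u_{\epsilon_{j_k},\lambda_k})$. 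The crucial point is that $\mu$ is supported in $\{u_0=0\}$: on any compact subset of $\{u_0>0\}$ the uniform convergence $u_{\epsilon_{j_k},\lambda_k}\to u_0$ forces $u_{\epsilon_{j_k},\lambda_k}>\epsilon_{j_k}/\lambda_k^s$ for large $k$, so $\beta_{\epsilon_{j_k}/\lambda_k^s}(u_{\epsilon_{j_k},\lambda_k})$ vanishes there. Hence $\int_{B_1'}u_0\,d\mu=0$, the two $u_0$-pieces of $\Psi^0$ cancel, and $\Psi^0(u_0,\chi_0,r)=r^{-(n-1)}\int_{B_r'}4\chi_0$; constancy in $r$ then identifies this constant with $\int_{B_1'}4\chi_0$, completing (ii) and yielding $\Psi^{x_0}(u,0+)\ge 0$.
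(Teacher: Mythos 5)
Your argument is correct and follows essentially the same route as the paper: the scaling identity $\Psi^0(u_{\lambda},\chi_{\lambda},r)=\Psi^{x_0}(u,\chi,\lambda r)$ plus Theorem~\ref{thm:monotonicity2} for the constancy in (ii), vanishing of the Weiss derivative for the $s$-homogeneity in (iii), and the integration-by-parts identity $\int_{B_1}|x_n|^{1-2s}|\nabla u_0|^2=s\int_{\partial B_1}|x_n|^{1-2s}u_0^2$ for identifying the constant with $\int_{B_1'}4\chi_0$. The only cosmetic differences are that you apply the monotonicity formula directly to the blowup pair $(u_0,\chi_0)$ (legitimate, since Lemma~\ref{lem:compactnesslemma2} makes it a limit solution pair on every ball) rather than passing to the limit in the monotonicity estimate along $u_{\lambda_j}$, and you justify the vanishing of the boundary-measure term via $\supp\mu\subset\{u_0(\cdot,0)=0\}$ where the paper invokes $u_\epsilon\beta_\epsilon(u_\epsilon)\to 0$ in $L^1_{\loc}$.
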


\begin{proof}
  \begin{enumerate}[(i)]
  \item Since $u\in C^{0,s}_{\loc}(\R^n)$ and $u(x_0)=0$, then
$$
\frac{s}{r^{n}}\int_{\partial B_r(x_0)}|x_n|^{1-2s}u^2 \leq C,
$$
uniformly in $r$. Thus $\Psi^{x_0}(u,r)\geq-C$ for any $r>0$.

\item We use the following scaling property of $\Psi$:
  \begin{equation}\label{eq:rescalingblowup}
    \Psi^{0}(u_{\lambda_j},\chi_{\lambda_j},r)=\Psi^{x_0}(u,\chi, \lambda_jr).
  \end{equation}
  From Theorem~\ref{thm:monotonicity2}, the limit
  $\Psi^{x_0}(u,\chi,\lambda_jr)$ as $j\rightarrow \infty$ exists, and
  is equal to $\Psi^{x_0}(u,\chi,0+)$. Thus, passing to
  $j\rightarrow \infty$ in \eqref{eq:rescalingblowup} we have
  \begin{align*}
    \Psi^0(u_0,\chi_0,r)=\Psi^{x_0}(u,\chi,0+),\quad \text{for all }0<r<1.
  \end{align*}

\item By (ii) and Theorem~\ref{thm:monotonicity2}, for all
  $0<s_1<s_2<1$,
  \begin{align*}
    \Psi^{x_0}(u,\chi, \lambda_j s_1)-\Psi^{x_0}(u,\chi, \lambda_j s_2)
    & = \Psi^{0}(u_{\lambda_j},\chi_{\lambda_j},s_1)-\Psi^{0}(u_{\lambda_j},\chi_{\lambda_j},s_2)\\
    & \geq \int_{s_1}^{s_2}\frac{2}{r^{n+1}}\int_{\partial B_r}|x_n|^{1-2s}\left(x\cdot \nabla u_{\lambda_j}-su_{\lambda_j}\right)^2 d\sigma_r dr
  \end{align*}
  By Theorem~\ref{thm:monotonicity2}, the left hand side goes to zero
  as $j\rightarrow \infty$. Thus passing to the limit we have
  \begin{align*}
    &\lim_{j\rightarrow\infty} \int_{s_1}^{s_2}\frac{2}{r^{n+1}}\int_{\partial B_r}|x_n|^{1-2s}\left(x\cdot \nabla u_{\lambda_j}-su_{\lambda_j}\right)^2 d\sigma_r dr\\
    &\qquad =\int_{s_1}^{s_2}\frac{2}{r^{n+1}}\int_{\partial B_r}|x_n|^{1-2s}\left(x\cdot \nabla u_{0}-su_{0}\right)^2 d\sigma_r dr=0, \quad \text{for all }0<s_1<s_2<1.
  \end{align*}
  This yields the desired homogeneity of $u_0$.

  Finally, we show that
  $\Psi^{0}(u_0,\chi_0,r)=\int_{B'_1}4\chi_0$. In fact,
  \begin{align*}
    \Psi^{0}(u_0,\chi_0,r)= \int_{B_1}|x_n|^{1-2s}|\nabla u_0|^2-s\int_{\partial B_1}u_0^2 +\int_{B'_1}4\chi_0
  \end{align*} 
  By (ii), $u_0$ is homogeneous of degree $s$ and satisfies
  $u_0 \div(|x_n|^{1-2s}\nabla u_0)=0$, since
  $\beta_\epsilon(u_\epsilon)\in L^1_{\loc}(\R^{n-1}\times \{0\})$ and
  $u_\epsilon \beta_\epsilon(u_\epsilon)\rightarrow 0$ in
  $L^1_{\loc}(\R^{n-1}\times \{0\})$ as $\epsilon\rightarrow 0$. Then
$$\int_{B_1}|x_n|^{1-2s} |\nabla u_0|^2-s\int_{\partial B_1}u_0^2 =0$$
and consequently
\[
\Psi^{0}(u_0,\chi_0,r)=\int_{B'_1}4\chi_0\geq 0.\qedhere
\]
\end{enumerate}
\end{proof}

\section{Asymptotic behavior of limit solutions}\label{s:asymptotic}

Assume $(u,\chi)$ is a limit solution pair in the sense of
Lemma~\ref{lem:compactnesslemma}.  In this section we will show the
asymptotic behavior of $u$ around the regular free boundary points of
$\mathcal{F}_u=\partial\{u(\cdot,0)>0\}\cap B_1'$. By \emph{regular
  free boundary point} we mean a point $x_0\in \mathcal{F}_u$, where
$\mathcal{F}_u$ has an \emph{inward unit normal $\nu$ in the
  measure-theoretic sense}. More precisely, by this we understand
$\nu\in\R^{n-1}$, $|\nu|=1$, such that
\begin{equation}\label{eq:normal}
  \lim_{r\rightarrow 0} \frac{1}{r^{n-1}}\int_{B'_r(x_0)}\left|\chi_{\{u>0\}}-\chi_{\{(x',0):\left\langle x'-x'_0,\nu\right\rangle>0\}}\right|dx' =0.
\end{equation}

The main result of this section is as follows.

\begin{theorem}[Limit solutions at regular points]\label{thm:maintheorem}
  Let $(u,\chi)$ be a limit solution pair in the sense of
  Lemma~\ref{lem:compactnesslemma}. Let $x_0\in \mathcal{F}_u$ be such
  that
  \begin{enumerate}[\upshape (i)]
  \item $\mathcal{F}_u$ has at $x_0$ an inward unit normal $\nu$ in
    the measure theoretic sense,
  \item $u$ is nondegenerate at $x_0$ in the sense that there exist
    $c, r_0>0$ such that
    \begin{equation}\label{eq:nondegenerate}
      \frac{1}{r^{n-1}}\int_{B'_r(x_0)} u dx' \geq c\,r^{s}, \quad \text{for any } 0<r<r_0.
    \end{equation}
  \end{enumerate}
  Then we have the following asymptotic development
  \begin{equation*}
    u(x)=\sqrt{\frac{2M}{c_0(s)}}2^{-s}\left( \sqrt{\left\langle x'-x'_0, \nu \right\rangle ^2 +x_n^2}+\left\langle x'-x'_0, \nu \right\rangle\right)^{s} + o (|x-x_0|^{s}),
  \end{equation*}
  with
  $$c_0(s)=s^22^{-1-2s}\frac{\sqrt{\pi}(7+4s(s-2))\Gamma(1-s)}{\Gamma(\frac{7}{2}-s)}.
$$
Moreover, we have $\Psi^{x_0}(u,\chi, 0+)=2M|B'_1|$.
\end{theorem}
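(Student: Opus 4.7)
The plan is to identify every subsequential blowup of $(u,\chi)$ at $x_0$ with the explicit half-space profile
$$U_\nu(x):=\sqrt{\tfrac{2M}{c_0(s)}}\,2^{-s}\bigl(\sqrt{\langle x',\nu\rangle^2+x_n^2}+\langle x',\nu\rangle\bigr)^s,$$
and then read off the asymptotic development and the Weiss energy directly from this limit. Concretely, I would fix any sequence $\lambda_k\downarrow 0$ and, after passing to a subsequence via Lemma~\ref{lem:compactnesslemma2}, obtain $u_{\lambda_k}^{x_0}\to u_0$ locally uniformly on $\R^n$ and $\chi_{\lambda_k}^{x_0}\stackrel{\ast}{\rightharpoonup}\chi_0$ in $L^\infty_{\loc}(\R^{n-1})$, with $(u_0,\chi_0)$ a limit solution pair on every ball. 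Corollary~\ref{cor:homoblowup} then gives that $u_0$ is $s$-homogeneous and that the Weiss energy is preserved in the blowup:
$$\Psi^0(u_0,\chi_0,r)\equiv \Psi^{x_0}(u,\chi,0+)=\int_{B'_1}4\chi_0.$$

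To pin down $(u_0,\chi_0)$ I would use the two hypotheses in combination. Rescaling the nondegeneracy \eqref{eq:nondegenerate} and passing to the limit yields $\int_{B'_1}u_0\,dx'\geq c>0$, ruling out the trivial blowup $u_0\equiv 0$. The measure-theoretic normal \eqref{eq:normal} gives $\chi_{\{u_{\lambda_k}(\cdot,0)>0\}}\to \chi_{\{\langle x',\nu\rangle>0\}}$ in $L^1(B'_R)$ for every $R>0$; together with the uniform convergence $u_{\lambda_k}\to u_0$, this forces $\{u_0(\cdot,0)>0\}\subset\{\langle x',\nu\rangle>0\}$, since any point of positivity of $u_0$ in the opposite open half-space would, by uniform convergence, produce a whole neighborhood on which $u_{\lambda_k}>0$, contradicting the $L^1$-convergence of the indicators. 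At this stage Proposition~\ref{prop:basicex} classifies the $s$-homogeneous, nontrivial limit solution pairs whose positivity set is contained in a half-space, and concludes that $u_0=U_\nu$ and $\chi_0=M\chi_{\{\langle x',\nu\rangle>0\}}$; the coefficient $\sqrt{2M/c_0(s)}\,2^{-s}$ is the one forced by the free boundary condition encoded in $c_0(s)$.

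Since every subsequential blowup coincides with the same $U_\nu$, a standard subsequence argument yields $u_\lambda^{x_0}\to U_\nu$ locally uniformly as $\lambda\to 0+$, which, rewritten at the original scale, is precisely the claimed expansion $u(x)=U_\nu(x-x_0)+o(|x-x_0|^s)$. The Weiss energy identity then follows from the identification of $\chi_0$:
$$\Psi^{x_0}(u,\chi,0+)=\int_{B'_1}4\chi_0=4M\cdot\tfrac{|B'_1|}{2}=2M|B'_1|.$$
The main obstacle in this plan is Proposition~\ref{prop:basicex}: classifying all $s$-homogeneous limit solution pairs supported in a half-space and verifying that only $U_\nu$ with the exact normalizing constant arises. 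Everything else is a combination of blowup-uniqueness reasoning with the compactness and monotonicity tools already established in Section~\ref{s:limit}.
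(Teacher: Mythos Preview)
Your overall architecture---extract a subsequential blowup $(u_0,\chi_0)$, show it is $s$-homogeneous via Corollary~\ref{cor:homoblowup}, use the measure-theoretic normal to force $u_0(\cdot,0)=0$ on $\{\langle x',\nu\rangle\le 0\}$, use nondegeneracy to exclude $u_0\equiv 0$, identify $u_0$ with $U_\nu$, and then conclude by uniqueness of the blowup---is exactly the paper's strategy. The gap is in the identification step: you invoke Proposition~\ref{prop:basicex} as a \emph{classification} of $s$-homogeneous limit solution pairs whose positivity set lies in a half-space, but that is not what the proposition says. Its hypothesis is that $u$ is \emph{already} of the form $\alpha\,2^{-s}(\sqrt{x_1^2+x_n^2}+x_1)^s$; its conclusion only pins down $\alpha$ and $\chi$. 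It does not rule out, say, an $s$-homogeneous $u_0$ that vanishes on a proper cone inside $\{x_1>0\}$.

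The paper closes this gap in two alternative ways. In the main proof, from $u_0=0$ on $\{x_1\le 0,x_n=0\}$ and $\div(|x_n|^{1-2s}\nabla u_0)=0$ on $\{u_0>0\}$ it applies the asymptotic development of Corollary~\ref{cor:asymptotic2} to obtain $u_0(x)=\alpha\,2^{-s}((x_1^2+x_n^2)^{1/2}+x_1)^s+o(|x|^s)$; combined with the $s$-homogeneity from Corollary~\ref{cor:homoblowup}(iii), the $o(|x|^s)$ term disappears and $u_0$ is exactly the half-space profile. Only then is Proposition~\ref{prop:basicex} invoked to fix $\alpha=\sqrt{2M/c_0(s)}$. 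The alternative proof first shows (using $\chi_0=M$ on $\{x_1>0\}$ and Proposition~\ref{prop:classify}) that $u_0$ cannot have free boundary points inside $\{x_1>0\}$, hence $u_0>0$ there; then, writing $u_0=r^s f(\omega)$, one sees that $f$ is a positive Dirichlet eigenfunction of the weighted spherical Laplacian on $\partial B_1\setminus\{\omega_1\le 0,\omega_n=0\}$ with the correct eigenvalue, which forces $f$ to be a multiple of the explicit profile. Either of these arguments is the missing ingredient in your plan; once you supply it, the rest of your outline goes through verbatim.
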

\begin{remark}
  In the case $s=\frac{1}{2}$, $c_0(\frac{1}{2})=\frac{\pi}{8}$.
\end{remark}

We start by identifying the limit solutions $u$ of the form
$$
u(x)=\frac{\alpha}{2^s} \left(\sqrt{x_1^2+x_n^2}+x_1\right)^s
$$
for some $\alpha >0$.  The free boundary in this case is
$\F_u=\{x_1=0,x_n=0\}$. This is the first step of understanding the
asymptotic development of limit solutions around the `regular' free
boundary points.

\begin{proposition}\label{prop:basicex}
  Let $(u,\chi)$ be a limit solution pair as in
  Lemma~\ref{lem:compactnesslemma} such that
$$
u=\frac{\alpha}{2^s}\left(\sqrt{x_1^2+x_n^2}+x_1\right)^{s} \text{ for
  some } \alpha > 0.$$ Then
$$
\chi=M\chi_{\{x_1>0\}}\quad\H^{n-1}\text{-a.e.\ in }\R^{n-1}
$$
and the constant $\alpha $ is given by
$$
\alpha = \sqrt{\frac{2M}{c_0(s)}},\quad\text{with $c_0(s)$ as in
  Theorem~\ref{thm:maintheorem}}.
$$
\end{proposition}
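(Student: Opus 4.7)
The function $u=\frac{\alpha}{2^s}\bigl(\sqrt{x_1^2+x_n^2}+x_1\bigr)^s$ admits the polar representation $u=\alpha\rho^s\cos^{2s}(\theta/2)$, with $x_1=\rho\cos\theta$ and $x_n=\rho\sin\theta$; it is $s$-homogeneous, $\L_s$-harmonic in $\{x_n\ne 0\}$, and has trace $u(\cdot,0)=\alpha x_1^s\chi_{\{x_1>0\}}$ on $B_1'$. The plan is to identify $\chi$ by combining uniform convergence with the $\{0,M\}$-dichotomy of Lemma~\ref{lem:L1_chi}, and then to pin down $\alpha$ via the domain-variation identity \eqref{eq:domain-variation} specialized to a tangential shift.

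First I would identify $\chi$ on $\{x_1>0\}$: since $u>0$ there, the uniform convergence $u_{\epsilon_j}\to u$ from Lemma~\ref{lem:compactnesslemma}(i) yields $u_{\epsilon_j}>\epsilon_j$ eventually on any compact subset of $\{x_1>0\}\cap B_1'$, so $\B_{\epsilon_j}(u_{\epsilon_j})\equiv M$ there, and $\chi=M$ a.e.\ on $\{x_1>0\}\cap B_1'$ by the weak-$*$ limit. Combined with Lemma~\ref{lem:L1_chi} we conclude $\chi\in\{0,M\}$ a.e., so it remains to exclude $\chi=M$ on $\{x_1<0\}$ and to compute $\alpha$.

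To that end I apply the domain-variation identity \eqref{eq:domain-variation} to $\psi=\phi e_1$ with $\phi\in C^\infty_c(B_1)$, giving
\[
T_\phi+\int_{B_1'}2\chi\,\partial_1\phi\,dx'=0,\quad T_\phi:=\int_{B_1}|x_n|^{1-2s}\Bigl[-\partial_1 u(\nabla u\cdot\nabla\phi)+\tfrac12|\nabla u|^2\partial_1\phi\Bigr]dx.
\]
The bulk integrand coincides (up to a sign) with $\sum_i T_{i1}\partial_i\phi$, where $T_{i1}=|x_n|^{1-2s}\bigl(\partial_i u\,\partial_1 u-\tfrac12|\nabla u|^2\delta_{i1}\bigr)$ is the first column of the stress--energy tensor. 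A direct computation gives $\sum_i\partial_i T_{i1}=(\L_s u)\partial_1 u$; since $\L_s u=2\mu\,\delta_{\{x_n=0\}}$ with $\mu$ supported in $\{x_1\le 0\}$ where the trace of $\partial_1 u$ vanishes, this classical divergence is zero distributionally. Integrating by parts on $B_1\setminus T_\delta$, with $T_\delta=\{\rho<\delta\}$ a tubular neighborhood of the codim-$2$ singular set $\{x_1=x_n=0\}$, the trace contributions on $B_1'\setminus B_\delta'$ also vanish, because $\partial_1 u$ and $\lim_{x_n\to 0^+}x_n^{1-2s}\partial_n u$ are zero in complementary ways on the two sides of $\{x_1=0\}$; hence only the flux across $\partial T_\delta$ contributes in the limit $\delta\to 0$.

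On $\partial T_\delta$ the special identity $\partial_1 u=su/\rho$ together with $x\cdot\nabla u=su$ reduces $T_{i1}\nu_i$, after a trigonometric simplification, to $-\tfrac{s^2\alpha^2}{2}|x_n|^{1-2s}\rho^{2s-2}\cos^{4s-2}(\theta/2)$; multiplied by $d\sigma=\delta\,d\theta\,dx''$ the integrand becomes $\delta$-independent, and as $\delta\to 0$ the contribution concentrates on $\{x_1=x_n=0\}\cap B_1$ with an angular factor given by a Beta-function integral that reduces to a multiple of $\pi/\sin(\pi s)$. The domain variation then reads $2\,\partial_1\chi=K(s)\alpha^2\,\delta_{\{x_1=0\}}$ as distributions on $B_1'$, for an explicit positive $K(s)$. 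Combined with $\chi\in\{0,M\}$ and $\chi=M$ on $\{x_1>0\}$, the alternative $\chi\equiv M$ would force $\partial_1\chi=0$ and contradict $K(s)\alpha^2>0$; hence $\chi=M\chi_{\{x_1>0\}}$, and matching the size of the delta ($K(s)\alpha^2=2M$) yields $\alpha=\sqrt{2M/c_0(s)}$ with the claimed Gamma-function expression for $c_0(s)$. The main technical obstacle is the careful $\delta\to 0$ flux computation and the identification of the resulting angular constant with the $c_0(s)$ stated in Theorem~\ref{thm:maintheorem}.
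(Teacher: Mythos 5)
Your route is structurally the same as the paper's: you get $\chi=M$ on $\{x_1>0\}$ from uniform convergence exactly as in Step~2(a) of the paper's proof, you invoke the $\{0,M\}$ dichotomy of Lemma~\ref{lem:L1_chi}, and you derive the key identity by testing the domain-variation formulation with a tangential field and computing the stress--energy (Pohozaev-type) flux across a shrinking tube around the edge $\{x_1=x_n=0\}$; your distributional statement $2\partial_1\chi=K(s)\alpha^2\,\delta_{\{x_1=0\}}$ is just \eqref{eq:alpha01} in disguise, and your exclusion of $\chi\equiv M$ on $\{x_1<0\}$ is the same contradiction as in Step~2(b). (Your choice $\psi=\phi e_1$ is fine: since the weight $|x_n|^{1-2s}$ does not depend on $x_1$, the identity $\sum_i\partial_i T_{i1}=\div(|x_n|^{1-2s}\nabla u)\,\partial_1u$ holds and the weight-variation term drops out.) The identification of $\chi=M\chi_{\{x_1>0\}}$ only needs $K(s)>0$ and is complete.

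The gap is in the final, decisive step, which you flag and leave undone: the evaluation of the angular constant and its identification with $c_0(s)$. Your computation and the paper's do not agree, and you cannot have both. You keep the cross term $(\nabla u\cdot\psi)(\nabla u\cdot\nu)|x_n|^{1-2s}$ in the flux, using $\partial_1u=su/\rho$ and $\nabla u\cdot\nu=\mp su/\rho$, so that after the cancellation $\cos^2(\theta/2)-\tfrac12\cos\theta=\tfrac12$ the integrand becomes $\tfrac{s^2\alpha^2}{2}|x_n|^{1-2s}\rho^{2s-2}\cos^{4s-2}(\theta/2)$; its angular integral is the Beta integral $s^2 2^{1-2s}B(s,1-s)=s^2 2^{1-2s}\pi/\sin(\pi s)$, which at $s=\tfrac12$ equals $\pi/4$. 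The paper's Step~1 instead treats the cross term as $O(\delta)$ (via ``$\nabla u\cdot\nu=su$'') and evaluates only $\tfrac12|\nabla u|^2(\psi\cdot\nu)|x_n|^{1-2s}$, arriving at $c_0(s)=s^2\int_0^\pi(\cos(\theta/2))^{2s-1}(\sin\theta)^{1-2s}\cos^2\theta\,d\theta$, whose closed form gives $c_0(\tfrac12)=\pi/8$. So the constant you actually obtain is not the $c_0(s)$ of Theorem~\ref{thm:maintheorem} (indeed your own description, ``a multiple of $\pi/\sin(\pi s)$,'' is incompatible with the stated Gamma-function expression, which contains the factor $(7+4s(s-2))/\Gamma(\tfrac72-s)$). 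Since $\partial_\rho u=su/\rho\sim\rho^{s-1}$ on $\{\rho=\delta\}$, the cross term scales exactly like the first one, so this is not a removable technicality: before you can assert $\alpha=\sqrt{2M/c_0(s)}$ with the stated $c_0(s)$ you must resolve the discrepancy --- either locate an error in your flux algebra or conclude that the constant must be the Beta-function value --- and as written the last sentence of your argument simply asserts the identity that is in question. Hence the proof of the value of $\alpha$ is incomplete as proposed, even though the method is the right one.
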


\begin{proof}
  In the proof below, we identify $\R^{n-2}$ with
  $\{0\}\times\R^{n-2}$, and denote $x''=(x_2,\ldots, x_{n-1})$,
  $B_r''=B_r'\cap\{x_1=0\}$.

  \step{Step 1.} We will show that for any
  $\phi\in C_c^\infty(B_1';\R^{n-1})$,
  $\phi=(\phi_1,\ldots, \phi_{n-1})$,
  \begin{align}\label{eq:alpha01}
    c_0(s)\alpha^2\int_{B_1''}\phi_1(0,x'')dx''+\int_{B_1'}2\chi \div\phi=0.
  \end{align}
  If we take $\psi \in C_c^\infty(B_1; \R^{n})$, with
  $\psi = (\phi,0)$ on $B_1'$ as a test function in
  \eqref{eq:domain-variation}, then
$$
\int_{B_1} \left[\frac{1}{2}|\nabla u |^2 \div{(|x_n|^{1-2s}\psi)} -
  (\nabla u )^t D \psi \nabla u |x_n|^{1-2s}\right] +\int_{B_1'} 2\chi
\div{\phi}=0.
$$
Since $|x_n|^{\frac{1-2s}{2}}|\nabla u|\in L^2_{\loc}(\R^n)$ by
Lemma~\ref{lem:compactnesslemma} (iv) and $|\psi_n(x)|\leq C|x_n|$,
then for any small $\delta>0$,
\begin{multline}\label{eq:variation2}
  \int_{\sqrt{x_1^2+x_n^2}\geq \delta}\left[\frac{1}{2}|\nabla u |^2
    \div{(|x_n|^{1-2s}\psi)} - (\nabla u )^t D \psi \nabla u
    |x_n|^{1-2s}\right]\\+\int_{B_1'} 2\chi \div{\phi}=O(\delta).
\end{multline}
We next rewrite the first integrand in the LHS of
\eqref{eq:variation2} as follows.
\begin{multline}\label{eq:div1}
  \frac{1}{2}|\nabla u|^2 \div {(|x_n|^{1-2s}\psi)}-(\nabla u)^tD\psi
  \nabla u|x_n|^{1-2s}\\
  =\frac{1}{2}\div{(|\nabla u|^2 \psi |x_n|^{1-2s})}-\frac{1}{2}\nabla
  (|\nabla u|^2)\cdot \psi
  |x_n|^{1-2s}\\
  -(\nabla u)^tD\psi \nabla u|x_n|^{1-2s}.
\end{multline}
To estimate the last two terms above, we observe that
\begin{align*}
  (\nabla u\cdot \psi)\div(|x_n|^{1-2s}\nabla u)=0\quad\text{in }
  \R^n\setminus \left\{\sqrt{x_1^2+x_n^2}<\delta\right\},
\end{align*}
since $\div{(|x_n|^{1-2s} \nabla u)}=0$ in
$\R^n\setminus \{x_1\leq 0,x_n=0\}$ and $\psi\cdot e_n=0$,
$\nabla' u =0$ on $\{x_1<0,x_n=0\}$. Thus, in
$\{\sqrt{x_1^2+x_n^2}\geq \delta\}$,
\begin{multline}\label{eq:div2}
  \frac{1}{2}\nabla (|\nabla u|^2)\cdot \psi |x_n|^{1-2s}+(\nabla u)^tD\psi \nabla u|x_n|^{1-2s}\\
  =\div{((\nabla u\cdot \psi)\nabla u|x_n|^{1-2s}}).
\end{multline}
Combining \eqref{eq:div1} and \eqref{eq:div2}, we rewrite
\eqref{eq:variation2} as
\begin{multline*}
  \int_{\sqrt{x_1^2+x_n^2}\geq \delta}\frac{1}{2}\div{(|\nabla u|^2
    \psi |x_n|^{1-2s})}-\div{((\nabla u\cdot \psi)\nabla
    u|x_n|^{1-2s}})\\+\int_{B_1'} 2\chi \div{\phi}=O(\delta).
\end{multline*}
An integration by parts gives
\begin{multline}
  \int_{\sqrt{x_1^2+x_n^2}=\delta}\frac{1}{2}|\nabla u|^2 (\psi\cdot
  \nu) |x_n|^{1-2s}-\int_{\sqrt{x_1^2+x_n^2}=\delta}(\nabla u\cdot
  \psi)(\nabla u \cdot \nu)|x_n|^{1-2s}\\+\int_{B_1'}2\chi \div
  \phi=O(\delta ),
\end{multline}
where $\nu$ is the unit outer normal to
$\{\sqrt{x_1^2+x_n^2}=\delta\}$. Next we estimate the first two
integrals in the LHS of the above equality. Since $u$ is homogeneous
of degree $s$ (and depends only on $x_1$ and $x_n$),
$\nabla u\cdot \nu=su$. Thus,
\begin{align*}
  \int_{\sqrt{x_1^2+x_n^2}=\delta}(\nabla u\cdot \psi)(\nabla u \cdot \nu)|x_n|^{1-2s}=\int_{\sqrt{x_1^2+x_n^2}=\delta}(\nabla u\cdot \psi)(su)|x_n|^{1-2s}=O(\delta),
\end{align*}
where we have used the growth estimate of $u$ around the zero set as
well as $|\nabla u||x_n|^{\frac{1-2s}{2}}\in L^2_{\loc}(\R^{n})$.  To
estimate the first integral we use the polar coordinates in
$(x_1,x_n)$-plane: $x_1=r\cos(\theta)$, $x_n=r\sin(\theta)$. Then on
$\{\sqrt{x_1^2+x_n^2}=\delta\}$,
\begin{align*}
  &\frac{1}{2}|\nabla u|^2(\psi\cdot \nu)|x_n|^{1-2s}\\
  &\qquad =\frac{s^2\alpha^2}{2\delta}(\cos(\theta/2))^{-1+2s}(|\sin(\theta)|)^{1-2s}\left[\cos(\theta)(\psi\cdot e_1)+\sin(\theta)(\psi\cdot e_n)\right]
\end{align*}
Since $\psi\cdot e_n=0$ on $B_1'$,
\begin{align*}
  &\lim_{\delta\rightarrow 0}\int_{\sqrt{x_1^2+x_n^2}=\delta}\frac{1}{2}|\nabla u|^2(\psi\cdot \nu)|x_n|^{1-2s} dx=\lim_{\delta\rightarrow
    0}\int_{B_1''}\frac{s^2\alpha^2}{2\delta}\cdot 2\delta \times\\
  &\qquad\times \int_0^{\pi} (\cos(\theta/2))^{-1+2s}(\sin(\theta))^{1-2s}(\cos(\theta))^2\psi_1(\delta\cos(\theta),x'',\delta\sin(\theta))\ d\theta dx''\\
  &\qquad=c_0(s)\alpha^2\int_{B_1''}\psi_1(0,x'',0)dx'',
\end{align*}
where
\begin{align*}
  c_0(s)&=s^2\int_0^\pi (\cos(\theta/2))^{-1+2s}(\sin(\theta))^{1-2s}(\cos(\theta))^2 d\theta \\
        &=s^22^{-1-2s}\frac{\sqrt{\pi}(7+4s(s-2))\Gamma(1-s)}{\Gamma(\frac{7}{2}-s)}.
\end{align*}
Combining the above estimates and letting $\delta\rightarrow 0$, we
have for all $\phi\in C^\infty_c(B_1';\R^{n-1})$
\begin{align*}
  c_0(s)\alpha^2\int_{B_1''}\psi_1(0,x'',0)dx''+\int_{B_1'}2\chi \div\phi=0
\end{align*}
Recalling $\psi_1=\phi_1$ on $B_1'$, we have proved
\eqref{eq:alpha01}.

\step{Step 2.} We now show that $\alpha=\sqrt{\frac{2M}{c_0(s)}}$ and
$\chi=M\chi_{\{x_1>0\}}$ $\H^{n-1}$-a.e.\ in $\R^{n-1}$.
\begin{enumerate}[(a)]

\item $\chi\equiv M$ in $\{x_1>0\}$.

  In fact, if $y\in \{x_1>0\}$, then $u(y',0)= \alpha
  (y_1)^s>0$.
  Hence by the uniform H\"older convergence of $u_{\epsilon_j}$ to
  $u$, we have $u_{\epsilon _j}\geq \epsilon _j$ in a neighborhood of
  $(y',0)$ for any $j\geq j_0$ for some $j_0=j_0(\alpha, y_1)$ large
  enough. Thus for $j>j_0$ and $(x', 0)$ in the neighborhood of
  $(y',0)$ we have
$$ \B_{\epsilon _j}(u_{\epsilon _j})(x')=\int_0^{u_{\epsilon_j(x')/\epsilon _j}}\beta (s)ds =M.$$
Letting $j\rightarrow \infty$ and since $y'$ is arbitrary, we get
$\chi\equiv M$ in $\{x_1>0\}$.

\item $\chi\equiv 0$ in $\{x_1<0\}$.

  In fact, we can take any $\phi$ in \eqref{eq:alpha01} such that
  $\supp{\phi_1}\subset \R^{n-1}\cap\{x_1<0\}$. Then the LHS of
  \eqref{eq:alpha01} will vanish. This implies that $\chi=const$ in
  $\{x_1<0\}$. By Lemma~\ref{lem:L1_chi}, $\chi\equiv M$ or
  $\chi\equiv 0$ in $\{x_1<0\}$. If $\chi\equiv M$ in $\{x_1<0\}$,
  then $\chi\equiv M$ in $\R^{n-1}$ by (a). Thus from
  \eqref{eq:alpha01} $\int_{\R^{n-2}}\phi_1(0,x'')dx''=0$ for any
  compactly supported vector field $\phi$, which is a contradiction.
\end{enumerate}

This then implies the claim that $\chi=M\chi_{\{x_1>0\}}$. Next,
applying an integration by parts to the RHS of \eqref{eq:alpha01} we
have
\begin{equation}\label{eq:deltaestimate3}
  c_0(s)\alpha^2\int_{B_1''}\phi_1(0,x'') dx''
  = 2M\int_{B_1''}\phi_1(0,x'') dx''.
\end{equation} 
This implies
\[
\alpha=\sqrt{\frac{2M}{c_0(s)}}.\qedhere
\]

\end{proof}

We are now ready to prove Theorem~\ref{thm:maintheorem}.

\begin{proof}[Proof of Theorem~\ref{thm:maintheorem}]
  Without loss of generality we assume $x_0=0$ and $\nu = e_1$. We
  also extend $u$ by the even reflection with respect to $x_n$.

  Consider the rescalings $u_\lambda$ and $\chi_\lambda$ at the
  origin. By Theorem~\ref{thm:uniform_holder} given $\rho>0$,
  $u_\lambda $ is uniformly bounded in
  $ C^{0,s}(\overline{B_{\rho/\lambda}}) $. Therefore, there exists a
  sequence $\lambda_j\rightarrow 0$, $u_0\in C^{0,s}(\R^n) $ and
  $\chi_0 \in L^\infty (\R^n)$ such that
  $u_{\lambda_j}\rightarrow u_0 $ uniformly on compact subsets of
  $\R^n$ and $\chi_{\lambda_j} \stackrel{\ast}{\rightharpoonup}\chi_0$
  in $L^\infty (\R^{n-1})$.

  Now, rescaling \eqref{eq:normal}, we see that for every $R>0$,
  \begin{equation*}
    |\{u_\lambda (\cdot,0)>0\}\cap \{x_1<0\}\cap B'_R|\rightarrow 0\quad\text{as } \lambda\rightarrow 0,
  \end{equation*}
  and we deduce that $u_0=0$ $\H^{n-1}$-a.e.\ in $\{x_1<0,
  x_n=0\}$.
  By continuity, $u_0$ vanishes on all of $\{x_1\leq 0, x_n=0\}$.
  Besides, we readily have that $u_0$ satisfies
  $\div(|x_n|^{1-2s}\nabla u_0)=0$ in $\{u_0>0\}$. Thus, we can apply
  Corollary \ref{cor:asymptotic2} in Appendix to obtain an asymptotic
  development
  \begin{equation}\label{eq:asymp2}
    u_0(x)=\alpha 2^{-s}\left((x_1^2+x_n^2)^{1/2}+x_1\right)^{s} +o(|x|^{s})  
  \end{equation}
  with $\alpha\geq 0$. Next, note that by rescaling the nondegeneracy
  condition \eqref{eq:nondegenerate} and passing to the limit, we have
  \begin{align*}
    \frac{1}{r^{n-1}}\int_{B'_r} u_0 dx' \geq c\,r^{s} , \quad \text{for any }r>0,
  \end{align*}
  which implies that $\alpha>0$.  On the other hand, by
  Corollary~\ref{cor:homoblowup}(iii), $u_0$ is homogeneous of degree
  $s$ and hence
  \begin{equation*}
    u_0(x)=\alpha 2^{-s}\left((x_1^2+x_n^2)^{1/2}+x_1\right)^{s}.
  \end{equation*}
  Now by Lemma~\ref{lem:compactnesslemma2}, $(u_0,\chi_0)$ is a limit
  solution pair and we can apply Proposition~\ref{prop:basicex} to
  conclude that $\alpha=\sqrt{\frac{2M}{c_0(s)}}$, which is a constant
  independent of the sequence $\lambda_j$. This proof the asymptotic
  development for $u$.

  Finally, by Proposition~\ref{prop:basicex}, we also have that
  $\chi_0= M\chi_{\{x_1>0\}}$ and hence by
  Corollary~\ref{cor:homoblowup}(ii),
  \[
  \Psi^0(u,\chi, 0+)=\int_{B'_1}4\chi_0=2M|B'_1|.\qedhere
  \]
\end{proof}

We conclude the paper with two propositions regarding some additional
properties of limit solutions and the value of
$\Psi^{x_0}(u,\chi,0+)$, which may be useful in the further treatment
of the free boundary. At the end we also give an alternative proof of
Theorem~\ref{thm:maintheorem}, relying on these results, rather than
on asymptotic developments in Appendix.

The first proposition says that the nondegeneracy condition
\eqref{eq:nondegenerate} at $x_0\in \F_u$ implies also a nondegeneracy
for $\{u(\cdot,0)=0\}$. The proof uses the dimension reduction
argument (see e.g.\ \cite{Weiss}).

\begin{proposition}\label{prop:classify}
  Let $(u,\chi)$ be a limit solution pair, and $x_0\in \mathcal{F}_u$
  such that $u$ is nondegenerate at $x_0$ in the sense of
  \eqref{eq:nondegenerate}. Then
  $0\leq \Psi^{x_0}(u,\chi,0+)< 4M|B'_1|$. In particular,
  $|B_r'(x_0)\cap\{u(\cdot,0)=0\}|>0$ for any $r>0$.
\end{proposition}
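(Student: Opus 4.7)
The lower bound is contained in Corollary~\ref{cor:homoblowup}(ii). For the upper bound, for any blowup $(u_0,\chi_0)$ at $x_0$ one has $\Psi^{x_0}(u,\chi,0+)=\int_{B_1'}4\chi_0\le 4M|B_1'|$ by Lemma~\ref{lem:L1_chi}. The strict inequality I would prove by contradiction. Suppose $\Psi^{x_0}(u,\chi,0+)=4M|B_1'|$. Since $\Psi^0(u_0,\chi_0,r)$ is constant in $r$ and, by the homogeneity of $u_0$ used in the proof of Corollary~\ref{cor:homoblowup}(ii) carried out at radius $r$, equal to $r^{1-n}\int_{B_r'}4\chi_0$, one deduces $\int_{B_r'}\chi_0=M|B_r'|$ for every $r>0$. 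Combined with $\chi_0\le M$ a.e., this forces $\chi_0\equiv M$ a.e.\ on all of $\R^{n-1}$.

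Next I would exploit the structure of $u_0$. By Corollary~\ref{cor:homoblowup}(iii) it is continuous, $s$-homogeneous, nonnegative, with $u_0(0)=0$, and solves $\L_s u_0=0$ in $\R^n\setminus\{x_n=0\}$. A change of variables shows the nondegeneracy~\eqref{eq:nondegenerate} is scale invariant, so $\rho^{1-n}\int_{B_\rho'}u_0\ge c\rho^s$ for every $\rho>0$, hence $u_0\not\equiv 0$. Applying the strong maximum principle of~\cite{FKS} in each connected half-space $\{\pm x_n>0\}$, together with the even symmetry in $x_n$, yields $u_0>0$ on $\{x_n\ne 0\}$. Therefore the zero set $Z:=\{u_0(\cdot,0)=0\}$ lies entirely in the thin slice and is a closed cone in $\R^{n-1}$ containing $0$; nondegeneracy also forces $Z\neq\R^{n-1}$.

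The contradiction will come by Weiss-type dimension reduction. If $Z=\{0\}$, then $u_0$ is $\L_s$-harmonic on $\R^n\setminus\{0\}$; since a point has zero capacity for the $A_2$-weighted operator $\L_s$ in dimension $n\ge 2$, $u_0$ extends to an $\L_s$-harmonic function on $\R^n$, and the strong maximum principle combined with $u_0(0)=0$ forces $u_0\equiv 0$, contradicting nondegeneracy. Otherwise $\partial Z$ is a nontrivial cone; pick $y_0\in\partial Z$ with $|y_0|=1$ and blow up $(u_0,\chi_0)$ at $y_0$. Using the $s$-homogeneity, the identity
$$
\frac{u_0\!\left((1+\lambda t)y_0+\lambda x\right)}{\lambda^s}=\frac{(1+\lambda t)^s u_0\!\left(y_0+\lambda x/(1+\lambda t)\right)}{\lambda^s}
$$
shows that the resulting blowup $v$ is translation-invariant along $y_0$ while still $s$-homogeneous, and $\chi_v\equiv M$ persists. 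This reduces the ambient dimension of the thin slice by one. Iterating finitely many times one returns to the case $Z=\{0\}$ in an ambient dimension $\ge 2$, closing the contradiction.

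For the last statement, if $|B'_r(x_0)\cap\{u(\cdot,0)=0\}|=0$ for some $r>0$, then $u(\cdot,0)>0$ a.e.\ on $B'_r(x_0)$. By the uniform convergence $u_\epsilon\to u$ we get $u_\epsilon>\epsilon$ eventually on compact subsets, so $\B_\epsilon(u_\epsilon)=M$ and hence $\chi\equiv M$ a.e.\ on $B'_r(x_0)$. Rescaling gives $\chi_0\equiv M$ on $\R^{n-1}$ for every blowup at $x_0$, whence $\Psi^{x_0}(u,\chi,0+)=4M|B_1'|$, contradicting the strict inequality just established. The main technical obstacle in this plan is checking carefully that each dimension-reduction step preserves the properties needed to restart the induction, namely $s$-homogeneity of the new blowup, the saturation $\chi_v\equiv M$, and membership in the class of limit solution pairs; once in place, the rest is a finite induction terminating in the strong maximum principle.
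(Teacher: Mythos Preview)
Your strategy is correct and is essentially the same dimension-reduction argument the paper uses, but the paper implements it more cleanly in a way that sidesteps the technical obstacle you flag. Instead of blowing up $u_0$ at $\hat x_0\in\F_{u_0}$ to obtain a new function $v$, the paper applies the Weiss monotonicity formula for $(u_0,\chi_0)$ \emph{directly at $\hat x_0$}: since $\chi_0\equiv M$ one has $\Psi^{\hat x_0}(u_0,\chi_0,0+)=4M|B_1'|$, while the $s$-homogeneity of $u_0$ gives $(u_0)_R^{\hat x_0}(x)=u_0(x+\hat x_0/R)\to u_0(x)$ and hence $\Psi^{\hat x_0}(u_0,\chi_0,R)\to\Psi^{0}(u_0,\chi_0,R)=4M|B_1'|$ as $R\to\infty$. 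Equality of the two limits in the monotonicity inequality forces $(x-\hat x_0)\cdot\nabla u_0=su_0$, and subtracting Euler's identity $x\cdot\nabla u_0=su_0$ yields $\hat x_0\cdot\nabla u_0=0$. Thus $u_0$ \emph{itself} is translation-invariant in the $\hat x_0$ direction, and the induction proceeds by picking further points of $\F_{u_0}$ without ever leaving the single function $u_0$; no iterated blowups, and hence no need to check that each successive pair stays in the class of limit solutions.

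Your route works too, but you should note that Lemma~\ref{lem:compactnesslemma2} already expresses $(u_0,\chi_0)$ as a limit of solutions of $(P_{\epsilon_{j_k}/\lambda_k^s})$, so one can indeed iterate it to get that $(v,\chi_v)$ is again a limit solution pair; together with Corollary~\ref{cor:homoblowup}(iii) this gives the $s$-homogeneity of $v$, and the translation invariance in $y_0$ passes to subsequent blowups, so the induction closes. For the final statement, the paper's argument is the same as yours but shorter: since $\chi=M$ on $\{u(\cdot,0)>0\}$, having $|B_r'(x_0)\cap\{u=0\}|=0$ forces $\chi_0\equiv M$ and hence $\Psi^{x_0}(u,\chi,0+)=4M|B_1'|$.
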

\begin{proof} Without loss of generality we assume that $x_0=0$. By
  Corollary~\ref{cor:homoblowup}(ii) and the inequality
  $0\leq \chi_0\leq M$, to prove the first part of the proposition, we
  essentially have to exclude the possibility that
  $\Psi^{0}(u,\chi,0+)=4M|B'_1|$, which is equivalent to having
  $\chi_0=M$ a.e.\ on $\R^{n-1}$ (recall that $(u_0,\chi_0)$ is a
  blow-up limit at $0$ along a subsequence
  $(u_{\lambda_j}, \chi_{\lambda_j})$).  We want to show that this
  implies that $u_0$ depends only on one variable $x_n$.

  To this end, take $\hat x_0\in \mathcal{F}_{u_0}\subset \R^{n-1}$
  such that $\hat x_0\neq 0$. Note that such point exists, otherwise
  $u_0>0$ in $\R^n\setminus \{0\}$ ($u_0\equiv 0$ on $\R^{n-1}$ is
  excluded by the nondegeneracy assumption), which would imply that
  $\div{(|x_n|^{1-2s}\nabla u_0)}=0$ in $\R^n\setminus \{0\}$.  Since
  $u_0$ is locally bounded, by the removability of point
  singularities\footnote{\emph{Proof.} Suppose $|u_0|\leq L$ and let
    $w$ be such that $\L_sw=0$ in $B_1$ and $w=u_0$ on $\partial
    B_1$.
    By comparing the difference $w-u_0$ with
    $\frac{2L}{\Phi_s(\delta)}\Phi_s(x)$ in $B_1\setminus B_{\delta}$,
    where $\Phi_s(x)=C_s|x|^{-(n-1-2s)}$ is the fundamental solution
    of $\L_s$, and letting $\delta\to 0+$, we conclude that
    $w=u_0$.\qed} we would have that
  $\div{(|x_n|^{1-2s}\nabla u_0)}=0$ in all of $\R^n$. Then, by the
  Harnack inequality \cite{FKS} (which implies Liouville theorem) we
  would have that $u_0\equiv 0$, which is a contradiction. Since $u_0$
  is homogeneous, then $\lambda \hat x_0\in \mathcal{F}_{u_0}$ for
  each $\lambda>0$. Moreover, due to the equality $\chi_0=M$ a.e.\ in
  $\R^{n-1}$ we will have
  $\Psi^{\lambda \hat x_0}(u_0,\chi_0, 0+)=4M|B'_1|$ for each
  $\lambda>0$. Applying the monotonicity formula
  (Theorem~\ref{thm:monotonicity2}) to $u_0$ at $\hat x_0$ we get
  \begin{equation}\label{eq:dimension}
    \begin{split}
      &\int_0^R \frac{2}{r^{n+1}}\int_{\partial B_r(\hat x_0)}|x_n|^{1-2s}\left( (x-\hat x_0)\cdot \nabla u_0-s u_0\right)^2 d\sigma_r dr \\
      &\qquad\leq \Psi^{\hat x_0}(u_0, \chi_0, R)-\Psi^{\hat x_0}(u_0, \chi_0, 0+)\\
      &\qquad= \Psi^{\hat x_0}(u_0, \chi_0, R)-\Psi^{0}(u_0, \chi_0, 0+)\\
      &\qquad= \Psi^{\hat x_0}(u_0, \chi_0, R)-\Psi^{0}(u_0, \chi_0, R)\\
      &\qquad \rightarrow 0\quad \text{as } R\rightarrow \infty,
    \end{split}
  \end{equation}
  where the last line is due to the fact that $u_0$ is homogeneous,
  more precisely, since $u_0$ is homogeneous of degree $s$, then one
  has
$$
(u_0)_{R}^{\hat x_0}(x)=u_0\left(x+({\hat x_0}/{R})\right)\rightarrow
u_0(x)\quad\text{as } R\rightarrow \infty.
$$
Therefore, by \eqref{eq:dimension} and $x\cdot \nabla u_0=su_0$ we
have $\nabla u_0\cdot \hat x_0=0$ in $\R^n$. This implies that $u_0$
is constant in $\hat x_0/|\hat x_0|$ direction. An induction argument
gives that $u_0(x)\equiv u_0(x_n)$. Then from $u_0(0)=0$ we obtain
that $u_0=0$ on all of $\R^{n-1}$, which contradicts to the
nondegeneracy assumption \eqref{eq:nondegenerate}.

The last statement in the proposition follows immediately from the
observation that $\chi=M$ in $\{u(\cdot,0)>0\}$.
\end{proof}

Our last proposition says that among all nonzero blowups at free
boundary points, the one with a flat free boundary as in
Proposition~\ref{prop:basicex} has the smallest Weiss energy.

\begin{proposition}\label{prop:classify2}
  Let $(u,\chi)$ be a limit solution pair. Let $x_0\in \mathcal{F}_u$
  and assume that $u$ is nondegenerate at $x_0$ in the sense of
  \eqref{eq:nondegenerate}. Then
  $\Psi^{x_0}(u,\chi, 0+)\geq 2M|B'_1|$. Moreover,
  $\Psi^{x_0}(u,\chi,0+)=2M|B'_1|$ iff any blow-up limit $u_0$ at
  $x_0$ is $u_0(x)=\alpha2^{-s}(\sqrt{x_1^2+x_n^2}+x_1)^{s}$ up to a
  rotation, with $\alpha=\sqrt{2M/c_0(s)}$.
\end{proposition}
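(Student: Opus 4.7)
Let $(u_0,\chi_0)$ be any blowup pair of $(u,\chi)$ at $x_0$ along a sequence $\lambda_j\to 0$. By Corollary~\ref{cor:homoblowup}, $u_0$ is $s$-homogeneous and
$$\Psi^{x_0}(u,\chi,0+)=\int_{B_1'}4\chi_0.$$
Rescaling \eqref{eq:nondegenerate} and passing to the limit, $u_0$ inherits nondegeneracy at the origin; in particular $u_0\not\equiv 0$. Arguing as in part (a) of the proof of Proposition~\ref{prop:basicex}, pointwise positivity of $u_0$ combined with uniform convergence (Lemma~\ref{lem:compactnesslemma2}(i)) forces $\chi_0=M$ a.e.\ on $\{u_0(\cdot,0)>0\}$; together with Lemma~\ref{lem:L1_chi}, this gives $\chi_0=M\chi_{E_0}$ with $E_0\supseteq\{u_0(\cdot,0)>0\}$, whence
$$\Psi^{x_0}(u,\chi,0+)\ge 4M\,\big|\{u_0(\cdot,0)>0\}\cap B_1'\big|.$$

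\textbf{Main step: angular lower bound.} The heart of the proof is to show $|\{u_0(\cdot,0)>0\}\cap B_1'|\ge |B_1'|/2$, which yields $\Psi^{x_0}(u,\chi,0+)\ge 2M|B_1'|$. Since $u_0$ is $s$-homogeneous, its trace positivity set is a cone and this amounts to the assertion that the cone subtends at least a hemisphere. I would proceed by dimension reduction in the style of the proof of Proposition~\ref{prop:classify}: by that result applied to $(u_0,\chi_0)$, one may pick $\hat x_0\in\F_{u_0}\setminus\{0\}$; at such a point, the argument built around \eqref{eq:dimension} combined with the homogeneity of $u_0$ produces a further blowup of $u_0$ at $\hat x_0$ that is translation-invariant in the direction $\hat x_0$. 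Iterating this reduction (using a Federer-style stratification of $\F_{u_0}$ to guarantee nontrivial progress at each stage), one reaches an effectively two-dimensional $s$-homogeneous limit-solution pair, in which Proposition~\ref{prop:classify} forces the trace positivity set to be exactly one open half-line (if both half-lines were positive, Proposition~\ref{prop:classify}'s argument would reduce the effective $u_0$ to $C|x_n|^s$, which vanishes on the trace). Transferring the angular opening back up the reduction chain yields the asserted lower bound.

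\textbf{Equality case, converse, and main obstacle.} If $\Psi^{x_0}(u,\chi,0+)=2M|B_1'|$, each inequality above must be tight, so $E_0=\{u_0(\cdot,0)>0\}$ up to a null set and the latter is a half-space. Combined with homogeneity and the asymptotic classification from the Appendix (as invoked in the proof of Theorem~\ref{thm:maintheorem}), this forces $u_0(x)=\alpha 2^{-s}(\sqrt{x_1^2+x_n^2}+x_1)^s$ up to a rotation, and Proposition~\ref{prop:basicex} pins down $\alpha=\sqrt{2M/c_0(s)}$. Conversely, for any blowup of this flat form, Proposition~\ref{prop:basicex} gives $\chi_0=M\chi_{\{x_1>0\}}$ up to rotation and hence $\int_{B_1'}4\chi_0=2M|B_1'|$. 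The principal obstacle lies in the dimension-reduction step: when $\Psi^{\hat x_0}(u_0,\chi_0,0+)<\Psi^0(u_0,\chi_0,1)$ strictly, the argument of Proposition~\ref{prop:classify} no longer produces translation invariance at $\hat x_0$, so one must stratify $\F_{u_0}$ carefully and select points where the Weiss energy is preserved at every scale. A secondary technicality is that the projection of $\chi_0$ under translation invariance in the direction $\hat x_0$ carries a weight $2\sqrt{1-|y'|^2}$, forcing the use of angular rather than ball measures throughout the reduction.
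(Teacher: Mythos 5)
Your setup (homogeneity of the blowup, $\Psi^{x_0}(u,\chi,0+)=\int_{B_1'}4\chi_0$, and $\chi_0=M$ a.e.\ on the trace positivity set, so that the statement reduces to the angular bound $|\{u_0(\cdot,0)>0\}\cap B_1'|\geq |B_1'|/2$) matches the paper's reduction. But the main step is not actually proved. The dimension-reduction device of Proposition~\ref{prop:classify} works there only because $\chi_0\equiv M$ a.e.\ forces $\Psi^{\hat x_0}(u_0,\chi_0,0+)=4M|B_1'|=\Psi^{0}(u_0,\chi_0,r)$ for \emph{every} $\hat x_0\in\F_{u_0}$, so the monotonicity gap in \eqref{eq:dimension} vanishes and one gets translation invariance. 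In your situation $\Psi^{\hat x_0}(u_0,\chi_0,0+)$ is in general strictly below $\Psi^{0}(u_0,\chi_0,\cdot)$, and — as you yourself note — no translation invariance follows; the proposed fix (``stratify $\F_{u_0}$ and select points where the Weiss energy is preserved at every scale'') is a program, not an argument, and no such stratification is constructed in the paper or supplied by you. Two further links in your chain are also unsubstantiated: that a two-dimensional $s$-homogeneous limit must have exactly one positive half-line (Proposition~\ref{prop:classify} does not say this, and nondegeneracy is not known to persist through your reductions), and that the ``angular opening'' can be transferred back up the reduction chain to control $|\{u_0(\cdot,0)>0\}\cap B_1'|$ in the original dimension.

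The paper's actual mechanism, which is the missing idea, is spectral: write $u_0=r^sf(\omega)$, observe that $f$ is the principal Dirichlet eigenfunction of the weighted spherical operator on $\partial B_1\setminus\Lambda_\omega$ with eigenvalue $s(s-n+1)$ fixed by the homogeneity, and compare with the symmetrized zero set via weighted Steiner symmetrization (Lemma~9.5 of \cite{AP} with the weight $\omega_n^{1-2s}$): if $|\Lambda_\omega|$ exceeded half the equatorial sphere, the principal eigenvalue would be strictly larger than the half-cap value, which is exactly the degree-$s$ value — a contradiction. This same rigidity handles your equality case, where your argument has a second gap: tightness only gives $|\{u_0(\cdot,0)>0\}\cap B_1'|=|B_1'|/2$, and ``measure exactly one half'' does not make the zero set a half-hyperplane, which is what you would need in order to invoke Corollary~\ref{cor:asymptotic2}; in the paper the flat form of $u_0$ comes from identifying $f$ with the explicit eigenfunction of the half thin-sphere, after which Proposition~\ref{prop:basicex} fixes $\alpha=\sqrt{2M/c_0(s)}$ (this last step you do have).
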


This result is the analogue of Theorem~9.4 in \cite{AP}, with a
similar proof.

\begin{proof}
  Suppose there is $x_0\in \mathcal{F}_u$ such that
  $\Psi^{x_0}(u,\chi,x_0)<2M|B'_1|$. Let $(u_0,\chi_0)$ be a blow-up
  limit at $x_0$ along a sequence
  $(u_{\epsilon_j,\lambda_j}^{x_0},
  \chi_{\epsilon_j,\lambda_j}^{x_0})$
  with $\epsilon_j/\lambda_j^s\rightarrow 0$ (by
  Lemma~\ref{lem:compactnesslemma2}). Then by
  Corollary~\ref{cor:homoblowup}(ii)(iii), $u_0$ is homogeneous of
  degree $s$, and
  \begin{equation}\label{eq:const_fre}
    \Psi^0(u_0,\chi_0,r)\equiv \int_{B'_1}4\chi_0=\Psi^{x_0}(u,\chi,0+)<2M|B'_1|.
  \end{equation}
  Since $u$ satisfies the nondegeneracy assumption at $x_0$, then
  arguing as in Theorem~\ref{thm:maintheorem} we have that $u_0$ is
  nontrivial.

  Let $\Lambda:=\{u_0(\cdot,0)=0\}$. Arguing as in \emph{Step 2}(a) of
  Proposition~\ref{prop:basicex} we have $\chi_0(x)=M$ in
  $\R^{n-1}\setminus \Lambda$. Thus \eqref{eq:const_fre} implies that
  \begin{equation}\label{eq:cap}
    |\Lambda \cap B'_1|>|B'_1|/2.
  \end{equation} 
  We write the homogeneous blow-up limit as $u_0(x)=r^sf(\omega)$,
  with $r=|x|$ and $\omega=x/|x|\in \partial B_1$. Since $u_0$ solves
  $\div(|x_n|^{1-2s}\nabla u_0)=0$ in $\R^n\setminus \Lambda$, $u_0>0$
  in $\R^n\setminus \Lambda$ and $u_0=0$ on $\Lambda$, the function
  $f(\omega)$ satisfies
  \begin{align*}
    \omega_n^{2s-1}\nabla _\omega\cdot (\omega_n^{1-2s}\nabla_\omega )f 
    = s(s-n+1)f&\quad\text{in }\partial B_1\setminus \Lambda, \\
    f=0&\quad\text{on } \partial B_1\cap \Lambda,\\
    f>0&\quad\text{on } \partial B_1\setminus \Lambda.
  \end{align*}
  Thus $f$ is the principal Dirichlet eigenfunction for the weighted
  spherical Laplacian on $\partial B_1\setminus \Lambda_\omega$ with
  $\lambda_0=s(s-n+1)$, where
  $\Lambda_\omega:=\Lambda\cap \partial B_1\subset \partial B_1\cap
  \{\omega_n=0\}$.
  From the variational formulation of the principal eigenvalue and
  using the symmetrization we have, among all $\Lambda_\omega$ with
  $|\Lambda_\omega|$ constant, $\lambda$ takes the minimum iff
  $\Lambda_\omega$ is a spherical cap (here and later by spherical cap
  we mean the `thin' spherical cap lying on $\{\omega_n=0\}$, i.e.\
  the classical spherical cap with center on
  $\partial B_1\cap \{\omega_n=0\}$ intersected with
  $\{\omega_n=0\}$). This follows from the fact that the Steiner
  symmetrization in any spherical variable on $\partial B_1$,
  orthogonal to $x_n$ will decrease the principal eigenvalue, see
  Lemma~9.5 in \cite{AP}, by adding the weight of
  $\omega_n^{1-2s}=(\cos\theta_{n-1})^{1-2s}$ (independent of
  $\theta_1$, \ldots, $\theta_{n-2}$) in the energy functional in the
  proof.

  Let $\lambda^*$ denote the minimum eigenvalue associated with the
  spherical cap $\Lambda_{\omega}^*$ with
  $|\Lambda_{\omega}^*|=|\Lambda_\omega|$. We immediately have
  $\lambda^*\leq \lambda_0$. On the other hand, however, by
  \eqref{eq:cap} we have $\lambda^* > \lambda_0$. This is due to the
  fact that $\lambda_0=s(s-n+1)$ is the principal eigenvalue
  associated with the half sphere, which by \eqref{eq:cap} is
  contained in some $\Lambda_{\omega}^*$ after a rotation. Hence we
  arrive at a contradiction.

  Finally, note that the eigenspace associated with half thin-sphere,
  which without of generality we assume to be
  $\partial B_1\cap \{\omega_1\leq 0, \omega_n=0\}$, is generated by
  $u(x)=(\sqrt{x_1^2+x_n^2}+x_1)^{s}$. By the above argument, if
  $\Psi^{x_0}(u,\chi,0+)=2M|B'_1|$, then any blow-up limit $u_0$ is of
  the form $u_0=c(\sqrt{x_1^2+x_n^2}+x_1)^{s}$, $c>0$ after a
  rotation. By Proposition~\ref{prop:basicex},
  $c=2^{-s}\sqrt{2M/c_0(s)}$.
\end{proof}

At the end of the paper we would like to give an alternative proof of
Theorem~\ref{thm:maintheorem}, without relying on
Corollary~\ref{cor:asymptotic2} in Appendix, but rather using
Proposition~\ref{prop:classify} and some ideas from
Proposition~\ref{prop:classify2}.

\begin{proof}[Alternative Proof of Theorem~\ref{thm:maintheorem}]
  We start again by rescaling \eqref{eq:normal}, to obtain that for
  every $R>0$,
  \begin{equation*}
    |\{u_\lambda (\cdot,0)>0\}\cap \{x_1<0\}\cap B'_R|\rightarrow 0\quad\text{as } \lambda\rightarrow 0,
  \end{equation*}
  which implies that $u_0=0$ $\H^{n-1}$-a.e.\ in $\{x_1<0, x_n=0\}$
  and hence $u_0=0$ everywhere on $\{x_1\leq 0, x_n=0\}$, by
  continuity.  Next, using that $\chi=M$ when $u(\cdot,0)>0$, we also
  have
$$
|\{\chi_\lambda<M\}\cap\{x_1>0\}\cap B_R'|\to 0\quad \text{as
}\lambda\to 0,
$$
implying that $\chi_0=M$ $\H^{n-1}$-a.e.\ in $\{x_1>0, x_n=0\}$.
Hence, by Proposition~\ref{prop:classify}, necessarily $u_0>0$ or
$u_0\equiv 0$ in all of $\{x_1>0, x_n=0\}$. Indeed, if
$\mathcal{F}_{u_0}\cap \{x_1>0,x_n=0\}\neq \emptyset$, then there
exists $\hat x_0\in \mathcal{F}_{u_0}\cap \{x_1>0,x_n=0\}$ such that
$\Psi^{\hat x_0}(u_0,\chi_0,0+)=4M|B'_1|$. Since $u_0$ is homogeneous
by Corollary~\ref{cor:homoblowup}(iii), then
$\lambda \hat x_0\in \mathcal{F}_{u_0}$ for each $\lambda>0$ and
$\Psi^{\lambda \hat x_0}(u_0,\chi_0,0+)=4M|B'_1|$. By the upper
semicontinuity of the map $x \mapsto \Psi^x(u_0,\chi_0,0+)$ we
necessarily have $\Psi^0(u_0,\chi_0,0 +)=4M|B'_1|$. However, this
contradicts Proposition~\ref{prop:classify}. Thus, $u_0>0$ in
$\{x_1>0, x_n=0\}$ and hence, $u_0$ satisfies
$\div(|x_n|^{1-2s}\nabla u_0)=0$ in
$\R^n\setminus\{x_1\leq 0, x_n=0\}\}$. Since $u_0$ is also homogeneous
of degree $s$, writing it as $u_0(x)=r^s f_0(\omega)$, with $r=|x|$
and $\omega=x/|x|$, we see that $f_0$ is a nonnegative eigenfunction
of the weighted spherical Laplacian as in
Proposition~\ref{prop:classify2} in
$\partial B_1\setminus\{\omega_1\leq 0, \omega_n=0 \}$. Hence, $f_0$
is a positive multiple of the explicitly given eigenfunction
$2^{-s}\left((\omega_1^2+\omega_n^2)^{1/2}+\omega_1\right)^{s}$ and
hence
$$
u_0(x)=\alpha 2^{-s}\left((x_1^2+x_n^2)^{1/2}+x_1\right)^{s},
$$
for some $\alpha>0$. Then Theorem~\ref{thm:maintheorem} follows
directly by applying Proposition~\ref{prop:basicex}, as in the first
proof of the theorem.
\end{proof}

\appendix

\section{}\label{s:appendix}
In this appendix we prove the asymptotic development for nonnegative
solutions of $\div(|x_n|^{1-2s}\nabla u)=0$ near the `flat' boundary
points. The proof uses ideas similar to those in Lemma~A.1 and
Corollary~A.1 in \cite{CLW}.

Below, we will denote,
\begin{align*}
  \Lambda &:= \{x\in \R^n:x_n=0, x_1\leq 0\}.\\
  P(x)&:=\frac{1}{2^s}\left(\sqrt{x_1^2+x_n^2}+x_1\right)^{s}.
\end{align*}

\begin{lemma}\label{lem:asymptotic}
  Let $u\in C^{0,s}(B_1)$ be nonnegative,
  $u=0$ on $\Lambda\cap B_1$ and satisfy
  $\div(|x_n|^{1-2s}\nabla u)\leq 0$ in $B_1\setminus\Lambda$. Then
  $u$ has an asymptotic development at the origin
  \begin{equation*}
    u(x)=\alpha P(x) + o (|x|^{s})
  \end{equation*}
  with a constant $\alpha \geq 0$.
\end{lemma}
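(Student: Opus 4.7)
The strategy is a standard blowup-and-classify argument, with uniqueness of the blowup constant as the essential technical point.

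First I would set up the blowups. Define $u_r(x):=u(rx)/r^s$ for $r\in(0,1)$. Since $u\in C^{0,s}(B_1)$ with $u(0)=0$ (as $0\in\Lambda$), the family $\{u_r\}$ satisfies the uniform pointwise bound $|u_r(x)|\le[u]_{C^{0,s}(B_1)}|x|^s$ and has uniformly bounded $C^{0,s}$ seminorms on any compact subset of $\R^n$ once $r$ is small enough. By Arzel\`a--Ascoli, any sequence $r_k\downarrow 0$ admits a subsequence along which $u_{r_k}\to u_0$ locally uniformly on $\R^n$, with $u_0\in C^{0,s}_\loc(\R^n)$. The limit $u_0$ is nonnegative, vanishes on $\Lambda$, and still satisfies $\div(|x_n|^{1-2s}\nabla u_0)\le 0$ in $\R^n\setminus\Lambda$: the operator $\L_s$ and the $s$-rescaling are compatible, and the one-sided inequality passes to the locally uniform limit of nonnegative supersolutions.

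Next I would classify such blowup limits as $\alpha P$ with $\alpha\ge 0$, in two steps. For homogeneity, I would invoke an Almgren-type frequency monotonicity for $u$, built from
\[
H(r) := \int_{\partial B_r}|x_n|^{1-2s}u^2\,d\sigma,\qquad D(r) := \int_{B_r}|x_n|^{1-2s}|\nabla u|^2\,dx,\qquad N(r):=\frac{rD(r)}{H(r)},
\]
which is nondecreasing on $(0,1)$ thanks to the supersolution hypothesis together with $u=0$ on the cone $\Lambda$ (so the boundary trace identities close). The limit $N(0+)$ exists, equals the degree of homogeneity of $u_0$, and is pinned to $s$ by the $C^{0,s}$ growth and the explicit form of $P$. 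Writing $u_0(x)=|x|^sf(x/|x|)$, the $\L_s$-harmonicity of $u_0$ in $\R^n\setminus\Lambda$ together with the degree-$s$ homogeneity translates into a weighted spherical Dirichlet eigenvalue problem for $f$ on the connected open set $\partial B_1\setminus(\Lambda\cap\partial B_1)$, with eigenvalue $s(s-n+1)$ exactly as in Proposition~\ref{prop:classify2}. Since $f\ge 0$ on a connected domain and vanishes on the boundary $\Lambda\cap\partial B_1$, the strong maximum principle and Courant's nodal-domain bound force $f$ to be a nonnegative multiple of the principal eigenfunction. Direct computation (or the observation that $P$ itself is an $s$-homogeneous $\L_s$-harmonic function in $\R^n\setminus\Lambda$ vanishing on $\Lambda$) identifies $P|_{\partial B_1}$ as such a principal eigenfunction, so $u_0=\alpha P$ for some $\alpha\ge 0$.

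The main obstacle is uniqueness of the constant $\alpha$ across subsequences. For this I would monitor the scale-invariant height
\[
\mathcal H(r) := r^{-n}\int_{\partial B_r}|x_n|^{1-2s}u^2\,d\sigma,
\]
which satisfies $\mathcal H(r)=\mathcal H_{u_r}(1)$ and evaluates on any blowup $\alpha P$ to $\alpha^2\,c(n,s)$ for a fixed $c(n,s)>0$. Combined with the monotonicity of $N$, a standard Rellich-type identity upgrades $\mathcal H$ (possibly after multiplication by an exponential factor $e^{-Cr}$) to a monotone function of $r$, so $\lim_{r\to 0+}\mathcal H(r)$ exists. Evaluating along the subsequence where $u_{r_k}\to\alpha P$ uniformly on $\partial B_1$ fixes this limit as $\alpha^2 c(n,s)$, so every subsequence produces the same $\alpha$. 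Hence the full family $u_r$ converges to $\alpha P$ uniformly on compact subsets of $\R^n$ as $r\to 0+$, which rewritten via the $s$-homogeneity of $P$ reads
\[
u(x)=\alpha P(x)+o(|x|^s)\quad\text{as }x\to 0,
\]
with $\alpha\ge 0$. The hardest parts are verifying frequency monotonicity under only the one-sided supersolution hypothesis and ensuring the spherical eigenvalue step needs only nonnegativity (not strict positivity) of $f$; these are precisely the ingredients carried over from Lemma~A.1 of \cite{CLW}.
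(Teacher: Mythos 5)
There is a genuine gap: your argument treats $u$ as if it were $\L_s$-harmonic in $B_1\setminus\Lambda$, while the lemma only assumes the one-sided inequality $\div(|x_n|^{1-2s}\nabla u)\le 0$ there. Both of your key steps collapse under this weaker hypothesis. First, Almgren-type frequency monotonicity (and the Rellich/height identities you invoke for the uniqueness of $\alpha$) rely on integration-by-parts identities such as $H'(r)=\frac{n-2s}{r}H(r)+2\int_{\partial B_r}|x_n|^{1-2s}u\,\partial_\nu u$ together with $\int_{B_r}|x_n|^{1-2s}|\nabla u|^2=\int_{\partial B_r}|x_n|^{1-2s}u\,\partial_\nu u$, which use the equation (and $u=0$ on $\Lambda$), not a differential inequality; you flag this as ``the hardest part'' but give no mechanism, and no such monotonicity is available for supersolutions, so even the homogeneity of blowups is unjustified. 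Second, even granting a homogeneous blowup $u_0$, the locally uniform limit only inherits $\div(|x_n|^{1-2s}\nabla u_0)\le 0$, so its spherical part is a nonnegative supersolution of the weighted spherical eigenvalue problem, not an eigenfunction; the classification ``$u_0=\alpha P$'' therefore does not follow. Note also that the supersolution generality is not incidental: in Corollary~\ref{cor:asymptotic2} the lemma is applied to $U=CP-u$, which is only a supersolution, so one cannot repair your argument by restricting attention to genuine solutions.

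For contrast, the paper's proof avoids monotonicity formulas and blowup classification altogether. It defines $\epsilon(r):=\sup\{\epsilon: u\ge \epsilon P\text{ in }B_r\}$, which is monotone by definition, sets $\alpha:=\lim_{r\to 0+}\epsilon(r)$, obtaining the lower bound $u\ge\alpha P+o(|x|^s)$ for free, and proves the matching upper bound by contradiction: if $u(x_k)-\alpha P(x_k)\ge \delta_0 r_k^s$ along $x_k\to 0$, a compactness argument in the rescalings plus the comparison principle (this is where the supersolution hypothesis enters) and the boundary Harnack principle, applied to auxiliary $\L_s$-harmonic functions $w$ and $w_k$ rather than to $u$ itself, yields $u\ge(\alpha+\mu/2)P$ in $B_{\gamma r_k}$, contradicting the definition of $\alpha$. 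If you want to salvage your strategy, you would have to either prove a frequency/Weiss-type monotonicity valid under the supersolution hypothesis or replace the classification step by a comparison argument of the above type; as written, the proposal does not prove the lemma.
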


\begin{proof}
  Let
$$\epsilon(r):=\sup\{\epsilon: u(x)\geq \epsilon P(x) \text{ in }
B_{r}\},\quad 0<r<1.$$
Then $\epsilon(r)$ is a nonincreasing function of $r$, and moreover it
is bounded above by the $C^{0,s}$ norm of $u$. Let
$\alpha:=\lim_{r\to 0+}\epsilon(r)$. From this definition of $\alpha$,
we immediately have
\begin{equation}\label{eq:oneside}
  u(x)\geq \alpha P(x)+ o(|x|^{s})\quad\text{in }  B_1.
\end{equation}

\begin{claim}
  We have $u(x)=\alpha P(x) +o(|x|^{s})$.
\end{claim}
We argue by contradiction. Assume that there are some $\delta_0>0$ and
a sequence $x_k\in B_1$ with $r_k:=|x_k|\rightarrow 0$ such that
\begin{equation}\label{eq:otherside}
  u(x_k)-\alpha P(x_k) \geq \delta_0 r_k^{s}.
\end{equation}
Consider the rescalings
$$
u_k(x):=\frac{u(r_kx)}{r_k^{s}}\quad\text{for } x\in B_{1/r_k},\quad
\overline{x}_k:=\frac{1}{r_k}x_k\in \partial B_1.
$$
Since $u\in C^{0,s}(B_1)$, then there exists a subsequence which we
still denote by $u_k$ and $v\in C^{0,s}_{\loc}(\R^n)$ such that
$u_k\rightarrow v$ uniformly on compact subsets in $\R^n$. We can also
assume $\overline{x}_k\rightarrow \overline{x}\in \partial B_1$. From
\eqref{eq:oneside} and \eqref{eq:otherside} we have
$$v-\alpha P \geq 0 \text{ in } \overline{B_1}, \quad v(\overline{x})-\alpha P(\overline{x})\geq \delta_0. $$
By the uniform convergence of $u_k$ to $v$ and the H\"older regularity
of $v$, there exists $\eta>0$ such that
$\overline{B_\eta (\overline{x})}\cap \Lambda =\emptyset$ and
$$v-\alpha P\geq \frac{\delta_0}{2}, \quad u_k-\alpha P\geq
\frac{\delta_0}{2}\quad\text{for } k>k_0\text{ large enough, on }
B_\eta (\overline{x}).$$
Now let $w$ be a solution to $\div{(|x_n|^{1-2s}\nabla w)}=0$ in
$B_1\setminus \Lambda$ with smooth boundary data, such that
\begin{align*}
  w=0 &\quad\text{on } \partial(B_1\setminus \Lambda)\setminus B_{\eta/2}(\overline{x}),\\
  w= \frac{\delta_0}{4}&\quad\text{on } \partial (B_1\setminus \Lambda)\cap B_{\eta/4}(\overline{x}),\\
  0\leq w\leq \frac{\delta_0}{4}&\quad\text{on } \partial (B_1\setminus \Lambda)\cap B_{\eta/2}(\overline{x}).
\end{align*}
By the maximum principle, $w$ is nonnegative in
$B_1\setminus \Lambda $. By the boundary Harnack principle (see
\cite{CaSalSil}), there exist small $\mu , \gamma>0$ which depend on
$\delta_0$ and $\epsilon $ such that
\begin{equation*}
  w(x)\geq \mu P(x)\quad\text{on } B_\gamma.
\end{equation*}
Now, each $u_k$ satisfies $\div(|x_n|^{1-2s}\nabla u_k)\leq 0$ in
$B_1\setminus\Lambda$. Let $w_k$ be the solution to
$\div(|x_n|^{1-2s}\nabla w_k)=0$ in $B_1\setminus\Lambda$ and
$w_k=\min(0,u_k-\alpha P)+w$ on $\partial (B_1\setminus\Lambda)$. By
the comparison principle, $u_k-\alpha P\geq w_k$ in
$B_1\setminus\Lambda$. Moreover, by \eqref{eq:oneside}, we see
$w_k\rightarrow w$ uniformly on $\partial(B_1\setminus\Lambda)$ and
hence, by the maximum principle, also on $B_1$. By the boundary
Harnack principle, we can assume therefore that
$$
w(x)-w_k(x)\leq \frac{\mu}2P(x),\quad\text{in }B_\gamma,
$$
for large $k$, which then gives
\begin{equation*}
  u_k(x)-\alpha P(x)\geq w_k(x)+\mu P(x)-w(x)\geq
  \frac{\mu}{2}P(x)\quad\text{in }B_\gamma.
\end{equation*}
Scaling back, we therefore have
$$
u(x)\geq \left(\alpha+\frac{\mu}2\right)P(x)\quad\text{in }B_{\gamma
  r_k},
$$
implying that $\epsilon(\gamma r_k)\geq \alpha+\frac{\mu}2$, which in
turn leads to the absurd $\alpha\geq \alpha+\frac{\mu}2$. This proves
the lemma.
\end{proof}

\begin{corollary}\label{cor:asymptotic2}
  Let $u\in C^{0,s}(B_1)$ be nonnegative,
  $u=0$ on $\Lambda\cap B_1$ and satisfy
  $\div(|x_n|^{1-2s}\nabla u)= 0$ in $\{u>0\}$. Then $u$ has an
  asymptotic development at the origin
  \begin{equation*}
    u(x)=\alpha P(x) + o (|x|^{s})
  \end{equation*}
  with a constant $\alpha \geq 0$.
\end{corollary}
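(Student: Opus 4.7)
The plan is to adapt the proof of Lemma~\ref{lem:asymptotic}. Define $\epsilon(r):=\sup\{\epsilon:u\geq\epsilon P\text{ in }B_r\}$, nonincreasing in $r$, and set $\alpha:=\lim_{r\to 0+}\epsilon(r)\geq 0$, which gives the lower bound $u\geq\alpha P+o(|x|^s)$ directly from the definition of $\alpha$. For the matching upper bound I would argue by contradiction: assuming $u(x_k)-\alpha P(x_k)\geq\delta_0 r_k^s$ along a sequence $r_k:=|x_k|\to 0$, rescale $u_k(x):=u(r_k x)/r_k^s$ and, using the uniform $C^{0,s}$ bound, extract a subsequential locally uniform limit $v\in C^{0,s}_{\loc}(\R^n)$. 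By the stability of $\L_s$-harmonicity under uniform convergence on compact subsets of the positive set, $v$ remains in the class of the corollary: $v\geq 0$, $v=0$ on $\Lambda$, and $\L_s v=0$ on $\{v>0\}$. Moreover $v\geq\alpha P$ on $\R^n$ and $v(\bar x)\geq\alpha P(\bar x)+\delta_0$ at some $\bar x\in\partial B_1\setminus\Lambda$.

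The critical modification of the comparison step of Lemma~\ref{lem:asymptotic} (which relied on the supersolution property $\L_s u\leq 0$) is the following. The case $\alpha>0$ admits a clean treatment: then $v\geq\alpha P>0$ everywhere on $\R^n\setminus\Lambda$, so $\{v>0\}\supseteq\R^n\setminus\Lambda$ and $v$ is in fact $\L_s$-harmonic on all of $B_1\setminus\Lambda$. Applying the boundary Harnack principle (see e.g.\ \cite{CaSalSil}) to the positive $\L_s$-harmonic functions $v-\alpha P$ and $P$ in $B_1\setminus\Lambda$ (both vanishing on $\Lambda$), one obtains $(v-\alpha P)/P\geq\mu>0$ on $B_\gamma\setminus\Lambda$ in view of the pointwise bound $(v-\alpha P)(\bar x)/P(\bar x)\geq\delta_0/P(\bar x)>0$. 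Scaling back this yields $u\geq(\alpha+\mu/2)P$ on $B_{\gamma r_k}$ for large $k$, contradicting $\epsilon(r)\to\alpha$.

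The main obstacle is the degenerate case $\alpha=0$, where $\{v>0\}$ may be a proper subset of $B_1\setminus\Lambda$ and the boundary Harnack argument is no longer directly applicable. To handle it I would compare $u$ against its $\L_s$-harmonic majorant $\tilde u_r$ in $B_r\setminus\Lambda$ (solving $\L_s\tilde u_r=0$ with $\tilde u_r=u$ on $\partial B_r$ and $\tilde u_r=0$ on $\Lambda$, so that $\tilde u_r\geq u$ by the maximum principle). Applying Lemma~\ref{lem:asymptotic} to $\tilde u_r$, which trivially satisfies $\L_s\tilde u_r=0\leq 0$ in $B_r\setminus\Lambda$, gives $\tilde u_r=\tilde\alpha_r P+o_r(|x|^s)$, and a rescaling argument exploiting the uniform $C^{0,s}$ bound on $u|_{\partial B_r}$ should then allow one to show $\tilde\alpha_r\to 0$ as $r\to 0+$, which combined with $u\leq\tilde u_r$ yields $u=o(|x|^s)$ as desired. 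This mirrors the iteration strategy of Corollary~A.1 in \cite{CLW}.
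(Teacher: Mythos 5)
There are two genuine gaps. First, in your case $\alpha>0$ the step ``scaling back yields $u\geq(\alpha+\mu/2)P$ on $B_{\gamma r_k}$'' does not follow as written: the boundary Harnack bound $v-\alpha P\geq \mu P$ concerns only the blow-up limit $v$, and the locally uniform convergence $u_k\to v$ gives an additive error $\eta_k\to 0$ that cannot be absorbed into a multiple of $P$, since $P$ vanishes on $\Lambda$. This is exactly the point where the proof of Lemma~\ref{lem:asymptotic} introduces the auxiliary functions $w$, $w_k$ and, crucially, uses the comparison $u_k-\alpha P\geq w_k$, which requires $u_k$ to be a supersolution; under the hypotheses of the corollary $u$ is only a subsolution of $\L_s$ in $B_1\setminus\Lambda$, so that comparison is not available in your setup. (This case is repairable: if $\alpha>0$, then $u\geq \epsilon(\rho)P>0$ off $\Lambda$ in a small ball, hence $u$ is $\L_s$-harmonic there and Lemma~\ref{lem:asymptotic} applies verbatim --- but you would need to say this.)

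Second, and more seriously, in the case $\alpha=0$ the key claim $\tilde\alpha_r\to 0$ is not proved, and the mechanism you invoke cannot prove it: rescaling plus the uniform $C^{0,s}$ bound only gives $\sup_{\partial B_1}u(r\,\cdot)/r^{s}\leq C$, hence $\tilde\alpha_r\leq C$; to conclude $\tilde\alpha_r\to 0$ you would need the rescaled boundary data to tend to zero, i.e.\ $u=o(|x|^{s})$, which is precisely what is to be proved, so the argument is circular. Moreover, your $\alpha$ (defined through the one-sided bound $u\geq\epsilon P$ in $B_r$) need not coincide with the coefficient of the development: the hypotheses only give $\{u(\cdot,0)=0\}\supseteq\Lambda$, and if the thin zero set contains points with $x_1>0$ accumulating tangentially at the origin, then $\epsilon(r)\equiv 0$ (so your $\alpha=0$) while the correct development may still carry a positive coefficient; in that situation the target $u=o(|x|^{s})$ of your second case is not even the right statement. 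The paper's proof avoids both issues by a short reduction: since $u$ is a subsolution in $B_1\setminus\Lambda$, comparison with its $\L_s$-harmonic replacement $w$ in $B_{3/4}\setminus\Lambda$ together with the boundary Harnack principle gives $u\leq CP$ in $B_{1/2}$, and then $U:=CP-u$ is a nonnegative supersolution vanishing on $\Lambda$, so Lemma~\ref{lem:asymptotic} applied to $U$ yields the development of $u$ directly, with no case distinction and no passage through blowups.
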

\begin{proof} Note that from the conditions above
  $\div(|x_n|^{1-s}\nabla u)\geq 0$ in $B_1\setminus\Lambda$.  Then,
  we claim that there exists $C\geq0$ such that
$$
u\leq CP(x)\quad\text{in }B_{1/2}.
$$
Indeed, if $w$ is a solution of the Dirichlet problem
$\div(|x_n|^{1-2s}\nabla w)=0$ in $B_{3/4}\setminus\Lambda$, $w=u$ on
$\partial (B_{3/4}\setminus \Lambda)$, then by the boundary Harnack
principle
$$
w\leq C P(x)\quad\text{in }B_{1/2}
$$ 
and our claim follows from the comparison $u\leq w$ in $B_{3/4}$. Now,
$$
U(x)=C P(x)-u(x)
$$
will satisfy the conditions of Lemma~\ref{lem:asymptotic} (in
$B_{1/2}$ instead $B_1$) and the asymptotic development of $u$ will
follow from that of $U$.
\end{proof}

\bibliographystyle{plain} \bibliography{biblio}

\end{document}